\documentclass[11pt]{amsart}

\usepackage{amsmath,amssymb,amsthm,amsfonts,mathtools}
\usepackage{geometry}
\usepackage{hyperref}
\usepackage{enumitem}
\usepackage{mathrsfs}
\usepackage{tikz-cd}
\usepackage{color}
\usepackage{verbatim}
\usepackage{bm}
\hypersetup{colorlinks=true, linkcolor=blue, citecolor=blue, urlcolor=blue}

\numberwithin{equation}{section}

\newtheorem{theorem}{Theorem}[section]
\newtheorem{proposition}[theorem]{Proposition}

\newtheorem{corollary}[theorem]{Corollary}

\theoremstyle{definition}
\newtheorem{definition}[theorem]{Definition}
\newtheorem{example}[theorem]{Example}

\newtheorem{remark}[theorem]{Remark}

\newcommand{\Ric}{\operatorname{Ric}}
\newcommand{\Hess}{\operatorname{Hess}}

\newcommand{\vol}{\operatorname{vol}}
\newcommand{\MCHess}{\operatorname{MCHess}}
\newcommand{\MHess}{\operatorname{MHess}}

\newcommand{\grad}{\nabla}
\newcommand{\Acal}{\mathcal A}

\DeclareMathOperator{\cs}{cs}
\DeclareMathOperator{\sn}{sn}

\title[Comparison Geometry via Modified Hessians]{Comparison Geometry on Manifolds with Density via Modified Hessians}

\author{Nicholas Ng}

\address{202 Lunt Hall\\
Department of Mathematics\\
Northwestern University\\
Evanston, IL 60208}

\email{nicholas.ng@northwestern.edu}
\urladdr{\url{https://www.nicholasngmath.com}}

\subjclass[2020]{53C20, 53C21, 53C24}

\keywords{comparison geometry, manifolds with density, weighted sectional curvature, Hessian comparison, rigidity}

\date{}

\begin{document}

\begin{abstract}
Comparison geometry for Bakry--\'Emery Ricci curvature has been extensively developed by Wei--Wylie and others. Motivated by the weighted sectional curvature framework introduced by Wylie and further developed by Kennard--Wylie--Yeroshkin, we study radial comparison geometry on manifolds with density through a modified Hessian arising from this framework.

Under nonnegative weighted sectional curvature together with suitable density control assumptions, we obtain a modified Hessian estimate for the radial function $u = \frac{1}{2}r^2$. From this estimate, we derive Hessian comparison, shape operator comparison, weighted Laplacian comparison, asymptotic radial volume density estimates, and polynomial weighted volume growth bounds. We introduce a normalized weighted radial volume density satisfying a monotonicity property analogous to the radial volume density monotonicity underlying Bishop--Gromov comparison.

We also study rigidity phenomena associated with these comparison estimates. Equality in the Hessian comparison theorem yields radial conformal rigidity, while equality in the modified Hessian estimate forces the metric to have an exact metric cone structure.
\end{abstract}

\maketitle

\section{Introduction}

Comparison geometry studies the geometric consequences of curvature bounds through comparison theorems for distance functions, geodesic spheres, Jacobi fields, and volume growth. Classical sectional curvature bounds lead to Hessian comparison, Laplacian comparison, radial volume density estimates, and rigidity results governing the radial geometry of Riemannian manifolds.

In recent years, several notions of weighted curvature have been introduced in the study of manifolds with density. The Bakry--\'Emery Ricci tensor was introduced by Bakry--\'Emery \cite{BakryEmery}, and comparison geometry for manifolds with density was developed by Wei--Wylie \cite{WeiWylie09} and others. Weighted sectional curvature was introduced by Wylie \cite{Wyl15} and further developed by Kennard--Wylie \cite{KW17} and Kennard--Wylie--Yeroshkin \cite{KWY}. These works established weighted analogues of classical comparison theorems, comparison results for Jacobi fields, convexity results, rigidity phenomena, and sphere theorems. For further comparison results in the weighted setting, see \cite{Wyl15,Wyl16,KW17,KWY}. For background on classical comparison geometry, see \cite{CheegerEbin}.

The present work organizes radial comparison geometry around a modified Hessian tensor. Rather than working through comparison estimates for Jacobi fields in the weighted setting, we derive Hessian comparison and related radial comparison estimates directly from the modified Hessian tensor
$$
\MHess(u)=\Hess u-g(\nabla u,\nabla\varphi)g.
$$
The modified Hessian framework considered here is motivated by the corresponding tensor introduced by Kennard--Wylie--Yeroshkin in \cite{KWY}. The logical structure of the paper is as follows.

$$\begin{array}{c}
\text{Nonnegative weighted sectional curvature} \\
+\ \\ \text{density control}
\end{array}
\Longrightarrow
\, \, \MHess \left(\frac{1}{2}r^2\right) \leq Kg
\, \, \Longrightarrow
\, \, \text{Hessian comparison}$$

$$\Longrightarrow
\, \, \left\{
\begin{array}{c}
\text{shape operator comparison} \\
\text{Laplacian comparison} \\
\text{volume density estimates} \\
\text{asymptotic volume growth estimates} \\
\text{rigidity results}
\end{array}
\right.$$

More precisely, applying the modified Hessian to the radial function $$u =\frac{1}{2}r^2, \qquad r(x)=d(p,x),$$ produces explicit comparison estimates governing the geometry of geodesic spheres. The resulting comparison theory includes the following:

\begin{itemize}
\item Hessian comparison estimates,
\item shape operator comparison estimates,
\item weighted Laplacian comparison estimates,
\item asymptotic radial volume density estimates,
\item polynomial volume growth estimates,
\item rigidity results arising from equality in Hessian comparison,
\item and monotonicity results for normalized weighted radial volume density.
\end{itemize}

In contrast to the classical comparison estimates, the weighted setting introduces the term $\varphi_r$ arising from the density function. In particular, the Hessian comparison theorem can be written as $$
S-\varphi_r I \leq \frac{K}{r}I,$$ where $S$ is the shape operator of the geodesic spheres. The tensor $S-\varphi_r I$ coincides with the tangential part of the weighted shape operator in the radial setting of Kennard--Wylie--Yeroshkin \cite{KWY}.

We also study rigidity phenomena associated with these comparison estimates. Equality in the Hessian comparison theorem yields radial conformal rigidity, while equality in the modified Hessian estimate yields exact conical rigidity.

The paper is organized as follows. Section 2 reviews weighted sectional curvature, modified Hessians, and the relevant radial geometry. Section 3 establishes the modified Hessian estimate. Section 4 derives the Hessian and shape operator comparison theorems. Section 5 establishes Laplacian comparison. Section 6 studies radial volume density estimates and polynomial volume growth. Section 7 proves rigidity in the equality case. Section 8 introduces a normalized weighted radial volume density and proves its monotonicity. Section 9 discusses examples and model geometries.

\section{Preliminaries}

\subsection{Weighted sectional curvature}

Riemannian manifolds $(M,g)$ with smooth density $e^{-\varphi}$ were first studied by Lichnerowicz and later developed by Bakry--\'Emery and others. These manifolds are also referred to as manifolds with smooth measure $\mu$, denoted $(M, g, \mu)$, since choosing a smooth measure $\mu$ is equivalent to choosing a density function. In \cite{Wyl15}, Wylie introduced the notion of \textit{weighted sectional curvature} for a Riemannian manifold $(M,g)$ with density $\varphi$, given by 
$$
\overline{\sec}_\varphi(U,V) = \sec(U,V) + \Hess\varphi(U,U) + d\varphi(U)^2 = g(R^{\nabla^\varphi}(V,U)U,V),
$$
for orthonormal vectors $U, V$, where $\nabla^\varphi=\nabla^{g,\varphi}$ is the \textit{weighted connection} 
$$
\nabla_X^\varphi Y = \nabla_XY - d\varphi(X)Y - d\varphi(Y)X,
$$ 
where $\nabla$ denotes the Levi-Civita connection of the metric $g$; $R^{\nabla^\varphi}$ denotes the corresponding \textit{weighted curvature tensor} 
$$
R^{\nabla^\varphi}(X,Y)Z = \nabla_X^\varphi\nabla_Y^\varphi Z -\nabla_Y^\varphi\nabla_X^\varphi Z - \nabla_{[X,Y]}^\varphi Z.
$$ 
The weighted sectional curvature is closely related to the weighted connection and the $1$-Bakry--\'Emery Ricci tensor 
$$
\Ric_\varphi^1 = \Ric^{\nabla^\varphi}.
$$

Comparison geometry for weighted Ricci curvature has been extensively studied in the work of Wei--Wylie and Wylie--Yeroshkin \cite{WeiWylie09,WY16}. In particular, Wylie--Yeroshkin in \cite{WY16} study comparison geometry for manifolds with density using the general torsion-free affine connection 
$$
\nabla_X^\alpha Y = \nabla_XY - \alpha(X)Y - \alpha(Y)X,
$$ 
where $\alpha$ is a $1$-form. Comparison geometry for weighted sectional curvature was further developed by Kennard--Wylie--Yeroshkin in \cite{KWY}. In particular, they develop weighted Rauch comparison and weighted convexity estimates through the geometry of the weighted connection and the conformal Hessian framework.

The present paper takes a different approach. Rather than emphasizing weighted Rauch comparison and related Jacobi field methods, we study direct radial Hessian estimates associated with the distance function. Under the weighted sectional curvature and density assumptions, we first derive a modified Hessian estimate for the radial function $$u = \frac{1}{2}r^2,$$ from which we derive Hessian comparison, shape operator comparison, Laplacian comparison, radial volume density estimates, volume growth estimates, and rigidity results.

Throughout the paper all geometric quantities are computed with respect to the original metric $g$. Let $(M^n,g)$ be a complete Riemannian manifold and let $\varphi\in C^\infty(M)$. We assume that $M$ has nonnegative weighted sectional curvature 
$$
\overline{\sec}_\varphi \geq 0,
$$ 
together with density control estimates of the form 
$$
a \leq \varphi \leq 0, \qquad |\nabla\varphi| \leq \frac{C}{r}.
$$ 
Under these assumptions, the weighted sectional curvature bound leads naturally to modified Hessian estimates for the radial distance function, which form the starting point for the comparison results developed below.

\subsection{Modified Hessians}

We introduce the modified Hessian tensors that are the central objects of study throughout the paper. Given a manifold with density $(M, g, \varphi)$, consider a conformal metric $\tilde g = e^{-2\varphi}g$. It is well known that for any smooth function $u$, 
\begin{equation}\label{eq:2.1}
\Hess_{\tilde g} u = \Hess_g u + d\varphi \otimes du + du \otimes d\varphi - g(\nabla \varphi, \nabla u)g.
\end{equation}
A basic property in the unweighted setting is that away from the cut locus,
$$
\nabla r \in \ker \Hess_g r
$$
whenever $r$ is a distance function associated with the metric $g$. However, Kennard--Wylie--Yeroshkin in \cite{KWY} observed via (\ref{eq:2.1}) that this is not true in the weighted setting, i.e., $\nabla r$ is not in $\ker \Hess_{\tilde g} r$. To remedy this, Kennard--Wylie--Yeroshkin considered the following lower order perturbation of $\Hess_{\tilde g} r$, \begin{equation}\label{eq:2.2} 
\Hess_{\tilde g}r - d\varphi \otimes dr - dr\otimes d\varphi.\end{equation} 
for which $\nabla r$ becomes an eigenvector ($\nabla u$ lies in the kernel when $u$ is a modified distance function). Kennard--Wylie--Yeroshkin also observed that equation (\ref{eq:2.2}) for a general smooth function $u$ satisfies useful convexity properties along geodesics.

We set notation for the \textit{modified conformal hessian}:
$$
\MCHess u:= \Hess_{\tilde g} u - d\varphi \otimes du - du \otimes d\varphi,
$$ 
where $\tilde g = e^{-2\varphi}g$ and $u$ is any smooth function. Substituting (\ref{eq:2.1}) in the conformal Hessian term of $\MCHess u$ gives the \textit{modified Hessian} 
$$
\MHess u := \Hess_g u - g(\nabla u, \nabla \varphi)g.
$$
The modified conformal Hessian framework considered here is motivated by the modified conformal Hessian tensor introduced by Kennard--Wylie--Yeroshkin in \cite{KWY}. We use the notation $\MCHess u$ when emphasizing the conformal metric formulation, and $\MHess u$ when working directly with the geometry of the metric $g$.

\begin{remark}
All geometric quantities in this paper, including geodesics, distance functions, Hessians, shape operators, and curvature tensors, are computed with respect to the underlying Riemannian metric $g$.
\end{remark}

\subsection{Hessian identities and radial distance functions}\label{subsec:2.3}

In this subsection we derive the basic Hessian identities associated with the radial distance function and the modified Hessian of 
$$
u(r) = \frac{1}{2}r^2, \qquad r(x)=d(p,x)
$$ 
for some fixed point $p \in M$.

Wherever the distance function $r$ is smooth, 
$$
\nabla u=r\nabla r.
$$ 
Moreover, 
$$
\Hess u=dr\otimes dr+r\Hess r.
$$ 
It follows that 
\begin{align*}
    \MHess u &= \Hess u - g(\nabla u, \nabla \varphi)g \\ &= dr \otimes dr + r\Hess r - rg(\nabla \varphi, \nabla r)g \\ &= dr \otimes dr + r\Hess r -r\varphi_rg,
\end{align*} 
where 
$$
\varphi_r := g(\nabla \varphi,\nabla r)
$$
denotes the radial derivative of the density function. In geodesic polar coordinates, if $\varphi = \varphi(r,\theta),$ then $\varphi_r = \partial_r \varphi(r,\theta).$ When restricting to a fixed radial geodesic, we occasionally write $\varphi_r$ as $\varphi'(r)$. Unless otherwise stated, we do not assume that the density function $\varphi$ is globally radial.

\begin{remark}
The function $u = \frac{1}{2} r^2$ is a natural radial function to consider since in Euclidean space, $\Hess\left(\frac{1}{2} r^2\right)=g.$ Thus the failure of this identity measures the deviation of the distance function geometry from the Euclidean model geometry.
\end{remark}

\subsection{Shape operator and Riccati equation}

We briefly review the shape operator and the associated Riccati equation for geodesic spheres. Let
$$
S_r(p)=\{x\in M:d(p,x)=r\}
$$
denote the geodesic sphere of radius $r$ centered at $p$. Wherever $r$ is smooth, the radial direction is given by $\nabla r$, and the Hessian of the distance function determines the second fundamental form of the geodesic spheres.

Define the shape operator by
$$
S(v)=\nabla_v\nabla r.
$$
Then
$$
\Hess r(v,w)=g(Sv,w).
$$

If $\gamma$ is a unit-speed radial geodesic and $J$ is a Jacobi field orthogonal to $\gamma'$, then
$$
S(J)=\nabla_{\gamma'}J.
$$
Using the Jacobi equation, one obtains the matrix Riccati equation for the shape operator.

\begin{proposition}[Riccati Equation]
Along radial geodesics,
$$
S'+S^2+R=0,
$$
where $R(v)=R(v,\gamma')\gamma'$ is the curvature operator along geodesics. 
\end{proposition}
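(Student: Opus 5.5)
Along a unit-speed radial geodesic $\gamma$ from $p$, in the region where $r$ is smooth (so before the cut locus), I will read the shape operator $S=\nabla\nabla r$ as a field of self-adjoint endomorphisms of $\gamma'^{\perp}$. I will write $S'=\nabla_{\gamma'}S$ for its covariant derivative along $\gamma$. The plan is to get the Riccati equation by differentiating the defining relation $S(v)=\nabla_v\nabla r$ in the radial direction and commuting covariant derivatives, which brings in the curvature tensor. Jacobi fields give an equivalent route, and I mention it below as a check.

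\textbf{Step 1.} First I record that $\nabla_{\nabla r}\nabla r=0$, since the integral curves of $\nabla r$ are unit-speed geodesics. Equivalently $S(\nabla r)=0$, because
$$
g(\nabla_v\nabla r,\nabla r)=\tfrac12 v\,|\nabla r|^2=0,
$$
and $\Hess r$ is symmetric. So $S$ maps $\gamma'^{\perp}$ to itself.

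\textbf{Step 2.} Take any vector field $V$ and set $N=\nabla r$. Then
$$
(\nabla_N S)(V)=\nabla_N\nabla_V N-\nabla_{\nabla_N V}N .
$$
Using the definition of the curvature tensor together with $\nabla_V\nabla_N N=0$, this becomes
$$
(\nabla_N S)(V)=R(N,V)N+\nabla_{[N,V]}N-\nabla_{\nabla_N V}N .
$$
Since $[N,V]=\nabla_N V-\nabla_V N$, the last two terms combine to $-\nabla_{\nabla_V N}N=-S(S(V))$. Hence
$$
S'(V)+S^2(V)+R(V,N)N=0,
$$
after using the sign convention under which $R(v)=R(v,\gamma')\gamma'$ is the curvature operator, so that $g(R(v),v)=\sec(v,\gamma')|v|^2$. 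This is exactly $S'+S^2+R=0$ along $\gamma$.

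\textbf{Step 3 (check via Jacobi fields).} The same identity follows from the Jacobi field description. Take normal Jacobi fields $J$ along $\gamma$ with $J'=S(J)$, coming from variations through radial geodesics. Differentiating gives
$$
J''=S'(J)+S(J')=S'(J)+S^2(J).
$$
The Jacobi equation $J''+R(J)=0$ then gives $(S'+S^2+R)(J)=0$. These $J$ span $\gamma'^{\perp}$ at every point before the cut locus, so the operator identity follows.

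The only real obstacle is bookkeeping. The sign convention for $R$ must be fixed so that the Jacobi equation reads $J''+R(J,\gamma')\gamma'=0$, and this is consistent with the paper's convention $g(R(V,U)U,V)=\sec$. One must also make sure the computation takes place where $r$ is smooth, so that $S$ is defined and the Jacobi fields span $\gamma'^{\perp}$.
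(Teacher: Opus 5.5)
Your proof is correct, but its main route differs from the paper's. The paper only sketches the Jacobi-field argument you give as Step 3: it writes $J''+R(J,\gamma')\gamma'=0$ and substitutes $J'=S(J)$. Your Step 3 supplies what that sketch leaves implicit. First, $J'=S(J)$ holds for Jacobi fields arising from variations through radial geodesics. Second, these fields span $\gamma'^{\perp}$ before the cut locus, and this is what turns $(S'+S^2+R)(J)=0$ into an operator identity.

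Your Step 2 is instead a pointwise tensor computation for $N=\nabla r$. It uses only $|\nabla r|=1$ (so $\nabla_N N=0$ on an open set) and torsion-freeness. It needs no variations and no conjugate-point or spanning argument. It holds on all of $TM$, trivially in the radial direction, and applies verbatim to any smooth distance function, such as distance to a hypersurface. It also settles the sign bookkeeping cleanly. With the paper's convention $R(X,Y)Z=\nabla_X\nabla_YZ-\nabla_Y\nabla_XZ-\nabla_{[X,Y]}Z$, the computation produces $R(N,V)N=-R(V,N)N$, which is exactly the operator $R(v)=R(v,\gamma')\gamma'$ of the statement.

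The Jacobi-field route, for its part, ties $S$ directly to the matrix $A(t)$ of Section 6. In the parallel frame, the relation $J_i'=S(J_i)$ says $A'=SA$. This is precisely what yields $\partial_r\log\mathcal A=\operatorname{tr}S=\Delta r$.
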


\begin{proof}
Let $J$ be a Jacobi field orthogonal to the radial geodesic $\gamma$. Then 
$$
J''+R(J,\gamma')\gamma'=0.
$$ 
Writing the Jacobi equation in terms of the shape operator of the geodesic spheres yields the Riccati equation.
\end{proof}

\begin{remark}
The operator $S$ describes the second fundamental form of geodesic spheres and satisfies a matrix Riccati equation along radial geodesics. Since
$$
\Hess r(V,W)=g(SV,W),
$$
Hessian comparison theorems may be interpreted as comparison results for solutions of the Riccati equation under curvature bounds.
\end{remark}

See \cite{Lee18,Petersen} for background on Hessian identities, Jacobi fields, and radial comparison geometry.

\subsection{Weighted Laplacian}

The \textit{weighted Laplacian} associated with the density function $\varphi$ is a standard operator in comparison geometry for manifolds with density; see \cite{WeiWylie09}. It is defined by
$$
\Delta_\varphi
=
\Delta-g(\nabla\varphi,\nabla\cdot).
$$
The operator $\Delta_\varphi$ is naturally associated with the weighted measure
$$
d\mu_\varphi=e^{-\varphi}d\vol.
$$

\section{Modified Hessian Estimates}\label{sec:3}

We establish the fundamental modified Hessian estimate underlying the comparison theory developed in subsequent sections.

\begin{theorem}[Modified Hessian Estimate]\label{thm:mchess} 
Let $(M, g, \varphi)$ be a Riemannian manifold with density $\varphi$ such that  
$$
a \leq \varphi \leq 0,
\qquad
|\nabla\varphi| \leq \frac{c}{r}
$$ 
for some constant $c> 0$. Suppose $\overline{\sec}_\varphi \geq 0$ and 
$$
u = \dfrac{1}{2}r^2, \qquad r(x) = d(p, x).
$$ 
Then wherever $r$ is smooth, 
$$
\MCHess u \leq Kg
$$
for some constant $K$.
\end{theorem}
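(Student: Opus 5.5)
The plan is to reduce everything to the explicit radial formula from Subsection~\ref{subsec:2.3} and then control the tangential part by a weighted Riccati inequality. First, substituting \eqref{eq:2.1} into the definition shows $\MCHess u=\MHess u$. Hence, wherever $r$ is smooth,
$$
\MCHess u = dr\otimes dr + r\Hess r - r\varphi_r\, g .
$$
Since $\Hess r(\nabla r,\cdot)=0$, this tensor is block diagonal with respect to the splitting $\mathbb R\nabla r\oplus \nabla r^\perp$, so it suffices to bound the two blocks separately.

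\emph{Radial block.} Here the value is $1-r\varphi_r$. By the hypothesis $|\nabla\varphi|\le c/r$ we get $1-r\varphi_r\le 1+c$. This is the only place the gradient bound is used.

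\emph{Tangential block.} Here the tensor equals $r\,(S-\varphi_r I)$ on $\nabla r^\perp$, with $S$ the shape operator. Along a unit-speed radial geodesic $\gamma$, set $A=S-\varphi' I$ on $\gamma'^\perp$, where $\varphi'=\varphi_r$ and $\varphi''=\Hess\varphi(\gamma',\gamma')$. Apply $\overline{\sec}_\varphi(\gamma',v)\ge0$ with $U=\gamma'$ to unit $v\perp\gamma'$; this gives
$$
g(R(v,\gamma')\gamma',v)\ge -(\varphi''+\varphi'^2).
$$
Combining this with the Riccati equation $S'+S^2+R=0$ yields, in the sense of quadratic forms,
$$
A'=S'-\varphi'' I\le -S^2+\varphi'^2 I=-(A+\varphi' I)^2+\varphi'^2 I=-A^2-2\varphi' A .
$$
This is the Kennard--Wylie--Yeroshkin weighted Riccati inequality. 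The integrating factor $W=e^{2\varphi}A$ turns it into $W'\le -e^{-2\varphi}W^2$.

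I would then pass to the largest eigenvalue $\lambda(t)$ of $W$, which is Lipschitz, so the inequality holds in the barrier/support sense. Near $t=0$ we have $S\sim \frac1t I$, so $\lambda\sim 1/t$ and $1/\lambda\to0$. Where $\lambda>0$, we get $(1/\lambda)'\ge e^{-2\varphi}\ge 1$, using $\varphi\le 0$. Integrating gives $1/\lambda(r)\ge r$, i.e.\ $\lambda\le 1/r$. If $\lambda$ ever becomes nonpositive it stays $\le 0$, which is even better. Therefore
$$
S-\varphi_r I=e^{-2\varphi}W\le \frac{e^{-2\varphi}}{r}I\le \frac{e^{-2a}}{r}I,
$$
using $\varphi\ge a$. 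Hence the tangential block is at most $e^{-2a}$, and $K=\max(1+c,e^{-2a})$ works.

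The main obstacle is making the matrix Riccati comparison rigorous. This requires justifying the largest-eigenvalue (support function) argument for a non-smooth $\lambda$, and the initial asymptotics $1/\lambda\to 0$ at $p$, where $\varphi$ is smooth so $\varphi'$ is bounded. I would handle it the standard way: fix a parallel unit field $E$ realizing $\lambda$ at a given time, use $t\mapsto g(WE,E)$ as a smooth lower support function, and run the scalar ODE comparison on it.
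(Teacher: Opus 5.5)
Your proposal is correct and follows the same route as the paper. You use the same splitting of $\MCHess u = dr\otimes dr + r\Hess r - r\varphi_r g$ into a radial block bounded by $1+c$ and a tangential block $r(S-\varphi_r I)$, controlled by $S-\varphi_r I\le e^{-2\varphi}/s\le e^{-2\varphi}/r$ with $s=\int_0^r e^{-2\varphi}\,dt\ge r$ because $\varphi\le 0$. The one difference is that you derive the weighted Riccati inequality $A'\le -A^2-2\varphi' A$ and the resulting comparison by hand, whereas the paper cites this as Kennard--Wylie--Yeroshkin's Theorem 4.16(1) with $\kappa=0$; this makes your argument self-contained and gives the slightly better constant $K=\max\{1+c,e^{-2a}\}$ in place of $e^{-2a}(1+c)$.
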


\begin{proof}
Assume $u$ is the modified distance function $u = h \circ r = \frac{1}{2}r^2$ where $h:\mathbb [0, \infty) \rightarrow [0,\infty)$ is defined by $x \mapsto \frac{1}{2}x^2$ so that $h(0) = h'(0) = 0$, and $h'(r) >0$ for $r > 0$.
By \cite[Proposition 3.2]{KWY}, 
$$
\MCHess u = (h'' -  h'\varphi')dr \otimes dr + h'(\Hess_g r - g(\nabla r,  \nabla \varphi)g_r),
$$ 
where $\varphi'=\partial_r\varphi$ denotes differentiation along the chosen radial geodesic; the remaining derivatives are also taken with respect to $r$.

Note 
$$
|\varphi'| = |g(\nabla \varphi, \nabla r)| \leq |\nabla \varphi| \leq \frac{c}{r}.
$$ 
In particular, $ -\varphi' \leq \tfrac{c}{r}$. This together with $u = \tfrac{1}{2}r^2$ yields 
$$
(h'' - h'\varphi') = 1 - r\varphi' \leq 1 + r\left(\dfrac{c}{r}\right) = 1 + c.
$$ 
Thus the radial term 
$$
(h'' - h'\varphi')dr \otimes dr \leq (1 + c)dr \otimes dr.
$$ 
We claim that 
\begin{equation}\label{eq:3.1} 
\Hess_g r - g(\nabla r, \nabla \varphi)g_r \leq  e^{-2\varphi}\dfrac{1}{r}g_r,\end{equation} 
from which it follows that the tangential term 
\begin{align*}
h'(\Hess_g r - g(\nabla r, \nabla \varphi)g_r) &\leq r\left(e^{-2\varphi}\dfrac{1}{r}\right)g_r = e^{-2\varphi}g_r \\ &\leq (1 + c)e^{-2\varphi}g_r\leq (1 + c)e^{-2a}g_r,
\end{align*} 
where we used $c > 0$ and $\varphi \geq a$ in the second to last and last inequality, respectively. Since $-2a \geq -2\varphi \geq 0$ implies $ e^{-2a} \geq e^{-2\varphi} \geq 1$, setting $K := \max\{1 + c, e^{-2a}(1 + c)\} = e^{-2a}(1 + c)$ yields  
$$
\MCHess u \leq K(dr\otimes dr + g_r) =  Kg
$$ 
as desired.

It remains to show (\ref{eq:3.1}). First, recall the re-parametrized distance defined in \cite{KWY} is
$$
s(r) = \int_0^r e^{-2\varphi(t)}dt, \, \, \, \,  \, \, \, \, \, \, \&\, \, \, \, \, \, \,  \, \, \, s(p, q) = \inf \{s \mid \gamma(0) = p, \gamma(1) = q\}.
$$ 
Since 
$$
t \rightarrow \dfrac{\cs_\kappa(t)}{\sn_\kappa(t)}
$$ 
is monotonically decreasing in $t$, we have 
$$
\varphi \leq 0 \implies s(r) = \int_0^r e^{-2\varphi(t)}dt \geq \int_0^r dt = r(q) \implies \dfrac{\cs_\kappa(s(p,  q))}{\sn_\kappa(s(p, q))} \leq \dfrac{\cs_\kappa(r(q))}{\sn_\kappa(r(q))}.
$$ 
With the assumption $\overline{\sec}_\varphi \geq 0$, we have $\overline{\sec}_\varphi(Y, \nabla r) \geq 0$ for orthonormal vectors $Y$ and $\nabla r$ in $T_qM$, where $q$ is some point for which $r$ is smooth. As $Y \perp \nabla r$, we may apply \cite[Theorem 4.16 (1)]{KWY} with $\kappa = 0$ and the preceding inequality to get 
\begin{align*}
(\Hess_g r - g(\nabla r, \nabla \varphi)g_r)(Y, Y) &= (\Hess_g r - g(\nabla r, \nabla \varphi)g)(Y, Y) \\ &\leq e^{-2\varphi(q)}\dfrac{\cs_0(s(p, q))}{\sn_0(s(p, q))} \leq e^{-2\varphi(q)}\dfrac{\cs_0(r(q))}{\sn_0(r(q))} = e^{-2\varphi(q)}\dfrac{1}{r(q)},
\end{align*} 
where $\sn_0(r) = r$ and $\cs_0(r) = \sn_0'(r) = 1$. So 
\begin{equation}\label{eq:3.2}
     (\Hess_g r - g(\nabla r, \nabla \varphi)g_r)(Y, Y) \leq e^{-2\varphi(q)}\dfrac{1}{r(q)} \, \, \, \, \, \forall \, \, Y \in T_qH.
\end{equation} 
Since $\Hess_g r(\nabla r,\cdot)=0,$ the tensor
$$
\Hess_g r-g(\nabla r,\nabla\varphi)g_r
$$
acts only on directions tangent to the geodesic spheres. Thus inequality (\ref{eq:3.2}) yields the tensor inequality (\ref{eq:3.1}).
\end{proof}

\begin{remark}
Since the weighted comparison equations of \cite{KWY} contain explicit radial density terms involving $\varphi_r$, some control on the density is required in order to obtain effective comparison estimates. In a similar manner, the assumptions imposed on $\varphi$ are chosen so that the $\varphi_r$ terms remain controlled in the resulting comparison estimates.
\end{remark}

By similar arguments as in the proof of Theorem \ref{thm:mchess}, we obtain the following conformal modified Hessian estimate.

\begin{proposition}[Conformal Modified Hessian Estimate]\label{prop:3.3} Let $(M, g, \varphi)$ be a Riemannian manifold with density $\varphi$ where $\varphi \leq 0$ and $|\nabla \varphi| \leq \dfrac{c}{r}$ for some constant $c > 0$. Suppose $\overline{\sec}_\varphi \geq 0$ and $u = \dfrac{1}{2}r^2$. Then 
$$
\MCHess u \leq K\tilde g
$$ 
for some constant $K$.
\end{proposition}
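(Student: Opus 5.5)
We reuse the decomposition from the proof of Theorem \ref{thm:mchess}, but now compare against $\tilde g = e^{-2\varphi}g$ instead of $g$. Write $u = h\circ r$ with $h(x)=\tfrac12 x^2$. By \cite[Proposition 3.2]{KWY},
$$
\MCHess u = (h''-h'\varphi')\,dr\otimes dr + h'\bigl(\Hess_g r - g(\nabla r,\nabla\varphi)g_r\bigr),
$$
where $g_r$ is the metric restricted to the tangent spaces of the geodesic spheres. Since $\tilde g = e^{-2\varphi}(dr\otimes dr + g_r)$, it suffices to find a single constant $K$ that bounds the radial block by $K e^{-2\varphi}\,dr\otimes dr$ and the tangential block by $K e^{-2\varphi}g_r$. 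There are no mixed terms, because $\Hess_g r(\nabla r,\cdot)=0$.

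\textbf{Radial block.} The hypothesis $|\nabla\varphi|\le c/r$ gives $h''-h'\varphi' = 1-r\varphi' \le 1+c$, exactly as before. Now use $\varphi\le 0$, so that $e^{-2\varphi}\ge 1$. Then
$$
(1+c)\,dr\otimes dr \le (1+c)e^{-2\varphi}\,dr\otimes dr.
$$
This is where the lower bound $a\le\varphi$ becomes unnecessary. Comparing with $\tilde g$ only requires bounding $1$ by $e^{-2\varphi}$ from above, which uses $\varphi\le0$. In Theorem \ref{thm:mchess} we instead had to bound $e^{-2\varphi}$ by a constant, which needed $a$.

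\textbf{Tangential block.} Inequality \eqref{eq:3.1} was proved using only $\varphi\le 0$, $\overline{\sec}_\varphi\ge0$ and \cite[Theorem 4.16 (1)]{KWY}. Multiplying it by $h'=r$ gives
$$
h'\bigl(\Hess_g r-\varphi_r g_r\bigr)\le e^{-2\varphi}g_r \le (1+c)e^{-2\varphi}g_r.
$$
Taking $K=1+c$ and adding the two blocks yields $\MCHess u\le (1+c)e^{-2\varphi}g = K\tilde g$ wherever $r$ is smooth.

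\textbf{Main obstacle.} The only delicate point is confirming that \eqref{eq:3.1} really does not use the lower bound $a$. In its proof, $a$ enters only after \eqref{eq:3.1} is established, so reusing it is legitimate. Since the statement does not say "wherever $r$ is smooth", I would add that qualifier. I would then remark that at $p$ itself $u$ is smooth with $\Hess u = g$ and $\nabla u = 0$, so $\MCHess u(p) = g\le K\tilde g$ there as well.
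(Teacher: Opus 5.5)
Your proposal is correct and follows the paper's own route: the paper proves this proposition ``by similar arguments'' to Theorem~\ref{thm:mchess}, and the remark after it explains that $\varphi\ge a$ is no longer needed because the factor $e^{-2\varphi}$ coming from \eqref{eq:3.1} is absorbed into the conformal weight of $\tilde g$. Your block-by-block comparison, with the radial block bounded using $1\le e^{-2\varphi}$ and the tangential block bounded by \eqref{eq:3.1}, gives $K=1+c$ and is exactly that argument written out.
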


\begin{remark}
We need the assumption $\varphi \geq a$ in Theorem \ref{thm:mchess} to get the bound $ e^{-2a}$ for the factor of $e^{-2\varphi}$ that appears after an application of \cite[Theorem 4.16 (1)]{KWY}. For Proposition \ref{prop:3.3}, we do not need this assumption as the factor of $e^{-2\varphi}$ is the weight for the conformal metric $\tilde g$. 
\end{remark}

\section{Hessian Comparison}

The modified Hessian estimate yields radial Hessian and shape operator comparison estimates.

\begin{theorem}[Hessian Comparison]\label{thm:hessian} Let $p \in M$ and $r(x) = d(p, x)$, and assume the hypotheses of Theorem 3.1. Then wherever the distance function $r$ is smooth, 
$$
\Hess r(X,X) \leq \left(\frac{K}{r}+\varphi_r\right)\left|X^\perp\right|^2
$$ 
for every $X \in TM$, where $X = X^\perp + g(X, \nabla r) \nabla r$ with $X^\perp \perp \nabla r$. 
\end{theorem}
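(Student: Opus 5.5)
The plan is to read the estimate off the modified Hessian of $u=\tfrac12 r^2$. We use the tangential inequality (\ref{eq:3.1}) proved inside Theorem \ref{thm:mchess}, together with the identity from Subsection \ref{subsec:2.3},
$$
\MHess u = dr\otimes dr + r\Hess r - r\varphi_r g .
$$
Fix a point $q$ where $r$ is smooth and write $X = X^\perp + g(X,\nabla r)\nabla r$.

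The first step uses $\Hess r(\nabla r,\cdot)=0$. Expanding bilinearly, the mixed and purely radial terms vanish, so $\Hess r(X,X)=\Hess r(X^\perp,X^\perp)$. This reduces the claim to tangent vectors $Y=X^\perp\perp\nabla r$. It also explains why only $|X^\perp|^2$ appears on the right.

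The second step bounds the tangential part. For such $Y$, the tangential part of $\MHess u$ (equivalently of $\MCHess u$, since the correction terms $d\varphi\otimes du+du\otimes d\varphi$ vanish on $Y\otimes Y$ because $du(Y)=r\,dr(Y)=0$) gives
$$
r\Hess r(Y,Y) - r\varphi_r|Y|^2 = \MCHess u(Y,Y).
$$
Theorem \ref{thm:mchess} bounds the right side by $K|Y|^2$. Dividing by $r>0$ gives $\Hess r(Y,Y)\le (K/r+\varphi_r)|Y|^2$.

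Concretely, I would take the sharper route: inequality (\ref{eq:3.1}) gives $\Hess r(Y,Y)-\varphi_r|Y|^2\le e^{-2\varphi}r^{-1}|Y|^2$. Then I use $e^{-2\varphi}\le e^{-2a}\le e^{-2a}(1+c)=K$, which holds by the density bounds $a\le\varphi\le 0$ and $c>0$. Adding $\varphi_r|Y|^2$ to both sides yields the statement with the same constant $K$ as in Theorem \ref{thm:mchess}.

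The main obstacle is bookkeeping rather than analysis. One must check that the restriction to $TS_r$ in (\ref{eq:3.1}), written with $g_r$, really controls $\Hess r(X,X)$ for arbitrary $X$. That is exactly what the kernel property in the first step provides. One must also check that the sign of $\varphi_r$ plays no role: it is simply moved to the right side, so no bound on $\varphi_r$ is needed at this stage.
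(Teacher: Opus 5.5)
Your proposal is correct and follows the paper's proof. Both arguments use $\Hess r(\nabla r,\cdot)=0$ to reduce to $X^\perp$ and read the bound off $dr\otimes dr + r\Hess r - r\varphi_r g = \MHess u \le Kg$; the paper solves the full tensor inequality for $\Hess r$ before evaluating at $X^\perp$, while you restrict to tangent vectors first. Your alternative via (\ref{eq:3.1}) is a harmless variant that even gives the sharper pointwise factor $e^{-2\varphi(q)}$ in place of $K$, and your remark about $d\varphi\otimes du+du\otimes d\varphi$ vanishing on $Y\otimes Y$ is unnecessary, since (\ref{eq:2.1}) shows $\MCHess u=\MHess u$ identically.
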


\begin{proof}
From Theorem \ref{thm:mchess}, 
$$
dr\otimes dr+r\Hess r-r\varphi_rg\leq Kg
$$ 
for some constant $K$. Solving for $\Hess r$ yields 
$$
\Hess r \leq \left(\dfrac{K}{r} + \varphi_r\right)g - \dfrac{1}{r}(dr \otimes dr).
$$ 
Since $\Hess r(X, X) = \Hess r(X^\perp, X^\perp)$ and $dr(X^\perp) = g(\nabla r, X^\perp) = 0$, the desired estimate follows.
\end{proof}

\begin{corollary}[Shape Operator Comparison]\label{cor:4.2}
The shape operator satisfies
$$
S \leq \left(\frac{K}{r}+\varphi_r\right)I.
$$
\end{corollary}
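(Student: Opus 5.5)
The corollary should follow directly from Theorem \ref{thm:hessian} once the Hessian inequality is rewritten in operator form using the identity $\Hess r(V,W)=g(SV,W)$ from the preliminaries. We work at a point $q$ where $r$ is smooth. There the shape operator $S(v)=\nabla_v\nabla r$ is self-adjoint with respect to $g$, because $\Hess r$ is symmetric. The inequality $S\le \lambda I$, with $\lambda=\frac{K}{r}+\varphi_r$, is to be understood in the sense of quadratic forms: $g(SX,X)\le \lambda\, g(X,X)$ for all $X$. Since $S$ is the second fundamental form of the geodesic sphere, it is natural to state this on $T_qS_r(p)=\{\nabla r\}^\perp$. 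It is also worth recording what happens on all of $T_qM$.

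The first step is to note that $S(\nabla r)=\nabla_{\nabla r}\nabla r=0$, because integral curves of $\nabla r$ are unit-speed geodesics. So $S$ preserves the orthogonal decomposition $T_qM=\mathbb R\nabla r\oplus \nabla r^\perp$, acting as zero on the first factor. For $X\perp\nabla r$, Theorem \ref{thm:hessian} with $X^\perp=X$ gives
\[
g(SX,X)=\Hess r(X,X)\le\Big(\frac{K}{r}+\varphi_r\Big)|X|^2,
\]
which is exactly $S\le(\frac{K}{r}+\varphi_r)I$ on the tangent space of $S_r(p)$. Because $S$ is self-adjoint there, this is equivalent to every principal curvature being at most $\frac{K}{r}+\varphi_r$.

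The only point requiring care is extending the statement to the radial direction on all of $T_qM$. There $g(S\nabla r,\nabla r)=0$, so the inequality in that direction requires $\lambda\ge 0$. I would justify $\lambda\ge0$ from the hypotheses of Theorem \ref{thm:mchess}. We have $|\varphi_r|\le c/r$, and $K=e^{-2a}(1+c)\ge 1+c>c$, so
\[
\frac{K}{r}+\varphi_r\ge \frac{K-c}{r}>0.
\]
For a general $X$, write $X=X^\perp+t\nabla r$. Then $g(SX,X)=g(SX^\perp,X^\perp)\le\lambda|X^\perp|^2\le\lambda|X|^2$, which gives the full operator inequality. This positivity check on $\lambda$ is the one step beyond direct substitution, and I expect it to be the main (mild) obstacle.
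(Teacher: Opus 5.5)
Your proposal is correct and takes the same approach as the paper, which gives no separate proof and treats Corollary~\ref{cor:4.2} as a direct restatement of Theorem~\ref{thm:hessian} via $\Hess r(V,W)=g(SV,W)$ on $(\nabla r)^\perp$. Your extension to the radial direction goes beyond what the paper states and is sound; the positivity $\frac{K}{r}+\varphi_r\ge\frac{1}{r}$ can also be read off from the $\nabla r$-component of $\MHess u\le Kg$, namely $1-r\varphi_r\le K$, which avoids relying on the specific constant $K=e^{-2a}(1+c)$ from the proof of Theorem~\ref{thm:mchess}.
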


\begin{remark}
The shape operator comparison can also be viewed from the perspective of the weighted connection of \cite{KWY}. The Hessian comparison estimate can be written in terms of the shape operator $S$ in the equivalent form 
$$
S-\varphi_r I \leq \frac{K}{r}I.
$$ 
Recall that the weighted connection of \cite{KWY} defines the modified shape operator
$$
S^\varphi(X) = \nabla^\varphi_XN = \nabla_XN-d\varphi(X)N-d\varphi(N)X.
$$
If $N=\nabla r$ and $X\perp\nabla r$, then the tangential component satisfies
$$
S^\varphi(X) = S(X)-\varphi_r X.
$$ 
Thus the tensor
$$
S-\varphi_r I
$$
appearing in the present comparison theory agrees with the tangential part of the modified shape operator in the radial setting in \cite{KWY}. The comparison theorem here, however, is formulated directly in terms of the classical radial Hessian and shape operator, with the density term appearing explicitly in the comparison geometry.
\end{remark}

\begin{remark} The shape operator comparison (Corollary \ref{cor:4.2}) gives explicit control on the principal curvatures of geodesic spheres (i.e., the eigenvalues of the shape operator $S$) through the density function $\varphi$.
\end{remark}

\section{Laplacian Comparison}

\begin{corollary}[Laplacian Comparison]\label{cor:5.1}
Assume the hypotheses of Theorem 3.1. Then
$$
\Delta r \leq (n-1)\left(\frac{K}{r}+\varphi_r\right).
$$
\end{corollary}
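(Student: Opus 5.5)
The plan is to take the trace of the Hessian comparison of Theorem \ref{thm:hessian} at a point $q$ where $r$ is smooth. We use an orthonormal frame adapted to the geodesic sphere through $q$. Concretely, choose an orthonormal basis $\{E_1,\dots,E_{n-1},E_n\}$ of $T_qM$ with $E_n=\nabla r$ and $E_1,\dots,E_{n-1}$ tangent to the sphere $S_{r(q)}(p)$, so that $E_i^\perp = E_i$ for $i\le n-1$. Then
$$
\Delta r = \operatorname{tr}_g \Hess r = \sum_{i=1}^{n-1}\Hess r(E_i,E_i) + \Hess r(\nabla r,\nabla r).
$$

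The radial term vanishes. Since $|\nabla r|=1$, we have $\Hess r(\nabla r,\cdot)=\tfrac12 d|\nabla r|^2=0$; this is the same fact $\Hess_g r(\nabla r,\cdot)=0$ already used in the proof of Theorem \ref{thm:mchess}. Equivalently, Theorem \ref{thm:hessian} applied to $X=\nabla r$ gives $X^\perp=0$, hence $\Hess r(\nabla r,\nabla r)\le 0$, and in fact equality holds.

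For each tangential vector $E_i$, Theorem \ref{thm:hessian} gives
$$
\Hess r(E_i,E_i)\le \left(\frac{K}{r}+\varphi_r\right)|E_i^\perp|^2 = \frac{K}{r}+\varphi_r.
$$
Summing over the $n-1$ tangential directions yields $\Delta r\le (n-1)\left(\frac{K}{r}+\varphi_r\right)$.

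An equivalent route is to take the trace in Corollary \ref{cor:4.2}: the shape operator $S$ restricted to $T_qS_r(p)$ is bounded by $\left(\frac{K}{r}+\varphi_r\right)I$ on an $(n-1)$-dimensional space, and $\Delta r=\operatorname{tr} S$.

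There is no real obstacle here. The only points needing care are that the bound is pointwise, valid wherever $r$ is smooth (away from $p$ and the cut locus), and that the trace must be taken only over the $n-1$ sphere directions. This restriction is what gives the factor $n-1$ rather than $n$: the cruder tensor bound $\Hess r\le \left(\frac{K}{r}+\varphi_r\right)g-\frac1r\,dr\otimes dr$ would otherwise contribute $\frac{K}{r}+\varphi_r-\frac1r$ in the radial slot, which is not obviously nonpositive. Using $\Hess r(\nabla r,\nabla r)=0$ directly removes that issue.
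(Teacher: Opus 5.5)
Your proposal is correct and follows the paper's proof: trace the Hessian comparison of Theorem \ref{thm:hessian} over an orthonormal basis tangent to the geodesic sphere, with the radial slot contributing $\Hess r(\nabla r,\nabla r)=0$. You make explicit the vanishing of that radial term, which the paper's one-line argument leaves implicit.
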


\begin{proof}
Taking the trace of the Hessian comparison estimate over an orthonormal basis tangent to the geodesic spheres yields the result.
\end{proof}

\begin{remark}
As in the modified Hessian and shape operator comparison estimates, the density $\varphi$ in the weighted setting yields an additional $\varphi_r$ term in the Laplacian comparison estimate. In particular, the density assumptions control the mean curvature of geodesic spheres.
\end{remark}

\begin{corollary}[Weighted Laplacian Comparison] Assume the hypotheses of Theorem 3.1 and let $\Delta_\varphi = \Delta - g(\grad \varphi, \grad \cdot)$ be the weighted Laplacian associated with the density function $\varphi$. Then 
$$
\Delta_\varphi r \leq (n-1)\frac{K}{r}+(n-2)\varphi_r.
$$ 
In particular, if $|\nabla \varphi| \leq \frac{c}{r}$ (or $|\varphi_r| \leq \frac{c}{r}$), then 
$$
\Delta_\varphi r \leq \frac{(n-1)K+(n-2)c}{r}.
$$
\end{corollary}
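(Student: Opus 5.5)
The plan is to combine Corollary \ref{cor:5.1} with the definition of the weighted Laplacian, and then use the density control to bound $\varphi_r$.

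\textbf{Step 1.} Evaluate $\Delta_\varphi$ on $r$. Wherever $r$ is smooth, $\nabla r$ is defined and
$$
\Delta_\varphi r = \Delta r - g(\nabla\varphi,\nabla r) = \Delta r - \varphi_r.
$$

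\textbf{Step 2.} Insert the Laplacian comparison. Corollary \ref{cor:5.1} gives $\Delta r \le (n-1)\left(\frac{K}{r}+\varphi_r\right)$. Subtracting $\varphi_r$ from both sides yields
$$
\Delta_\varphi r \le (n-1)\frac{K}{r} + (n-1)\varphi_r - \varphi_r = (n-1)\frac{K}{r} + (n-2)\varphi_r.
$$
This is the first assertion. No sign of $\varphi_r$ is needed here, since we only subtract the same quantity from both sides.

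\textbf{Step 3.} Prove the ``in particular'' statement. By Cauchy--Schwarz and $|\nabla r| = 1$,
$$
\varphi_r \le |\varphi_r| \le |\nabla\varphi| \le \frac{c}{r},
$$
and if only $|\varphi_r| \le \frac{c}{r}$ is assumed, the same bound holds directly. We must multiply this bound by $(n-2)$. For $n \ge 2$ this coefficient is nonnegative, so $(n-2)\varphi_r \le (n-2)\frac{c}{r}$. Substituting gives
$$
\Delta_\varphi r \le \frac{(n-1)K + (n-2)c}{r}.
$$

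The only point needing care is the sign of $n-2$. It is nonnegative in every dimension where geodesic spheres have tangent directions, i.e.\ $n \ge 2$, which is implicit throughout the paper. In dimension $n=1$ the trace in Corollary \ref{cor:5.1} is over an empty basis, so the statement would have to be handled separately; I would simply note that we assume $n \ge 2$. Everything else is direct substitution.
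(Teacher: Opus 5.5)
Your proof is correct and follows the paper's argument exactly: write $\Delta_\varphi r = \Delta r - \varphi_r$, substitute the Laplacian comparison (Corollary~\ref{cor:5.1}), and then use $\varphi_r \le c/r$. You also note that the ``in particular'' bound uses $n-2 \ge 0$, a point the paper leaves implicit; for $n=1$ with $\varphi \equiv 0$ the second inequality would indeed fail.
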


\begin{proof}
$$
\Delta_\varphi r = \Delta r - g(\nabla \varphi, \nabla r) \leq (n-1)\left(\dfrac{K}{r} + \varphi_r\right) - \varphi_r = (n - 1)\dfrac{K}{r} + (n - 2)\varphi_r.
$$ 
The second statement follows immediately from the hypothesis $\varphi_r \leq \frac{c}{r}$ and the first inequality.
\end{proof}

\section{Radial Volume Density and Volume Growth}
The Laplacian comparison estimate yields radial volume density estimates and volume growth bounds. Let $(M,g)$ be an $n$-dimensional Riemannian manifold, and fix a point $p \in M$. Let $\gamma:[0,\epsilon)\to M$ be a unit-speed geodesic starting at $p$, $\gamma(0)=p$, in a chosen direction $\theta \in S_pM$.

\subsection{Transverse Parallel Frame and Jacobi Fields}

We recall the standard Jacobi field construction in comparison geometry; see \cite{Petersen} for details.

Choose a parallel orthonormal frame along $\gamma$ such that 
$$
\{E_1(t),\dots,E_{n-1}(t)\}
$$
spans $\gamma'(t)^\perp$ for each $t$. For each $i=1,\dots,n-1$, let $J_i(t)$ be the Jacobi field along $\gamma$ satisfying the standard Jacobi equation
$$
J_i''(t) + R(J_i(t), \gamma'(t))\gamma'(t) = 0,
$$
with initial conditions
$$
J_i(0) = 0, \qquad J_i'(0) = E_i(0).
$$

\begin{remark}
The Jacobi fields $J_i(t)$ describe how nearby radial geodesics spread apart in directions orthogonal to $\gamma'(t)$. Equivalently, they measure the radial expansion of small angular sectors centered about the geodesic $\gamma$.
\end{remark}

\subsection{Radial Volume Density}

The radial volume density is determined by the transverse Jacobi fields along radial geodesics. Define the $(n-1)\times(n-1)$ matrix
$$
A(t) = \begin{bmatrix} J_1(t) & \cdots & J_{n-1}(t) \end{bmatrix},
$$
where each Jacobi field $J_i(t)$ is expressed in the basis $\{ E_j(t) \}_{j=1}^{n-1}$.

\begin{definition}[Radial Volume Density]
The \emph{radial volume density} in geodesic polar coordinates centered at $p$ is
$$
\mathcal A(r,\theta) := \det A(r),
$$
where $r$ is the radial distance along $\gamma$, i.e., $r = t$.
\end{definition}

\begin{remark}
Geometrically, $\mathcal A(r,\theta)$ represents the $(n-1)$-dimensional volume of the infinitesimal parallelepiped spanned by geodesics emanating from $p$ in directions orthogonal to $\gamma'$.  
It is the natural density factor appearing in polar coordinates around $p$.
\end{remark}

\subsection{Derivative of the Radial Volume Density and the Laplacian}

We compute the derivative of $\log \mathcal A(r,\theta)$ in the radial direction.

\begin{proposition}[Radial derivative and Laplacian]
Along the geodesic $\gamma$, we have
$$
\partial_r \log \mathcal A(r,\theta) = \Delta r (\gamma(r)),
$$
where $\Delta r$ is the Laplacian of the distance function at $\gamma(r)$.
\end{proposition}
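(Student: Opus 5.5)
The plan is to work along a fixed radial geodesic $\gamma(t)=\exp_p(t\theta)$, at radii $0<r$ before the first conjugate/cut point, so that $r$ is smooth near $\gamma(r)$. The argument is Jacobi's formula for the derivative of a determinant combined with the identity $S(J)=\nabla_{\gamma'}J$ recalled in the subsection on the shape operator.

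\textbf{Step 1: the matrix equation.} Each $J_i$ is the variation field of the family of radial geodesics $s\mapsto \exp_p(t(\theta+s v_i))$ with $v_i=E_i(0)$. Since $J_i(0)=0$ and $\nabla r$ is the velocity field of these geodesics, the identity recalled before the Riccati equation gives
$$
J_i'(t)=\nabla_{J_i}\nabla r=S(J_i(t)), \qquad t>0 .
$$
The reason is that $\gamma'$ extends to $\nabla r$, and $[J_i,\nabla r]=0$ for the coordinate variation, so $\nabla_{\gamma'}J_i=\nabla_{J_i}\nabla r$.

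\textbf{Step 2: Jacobi's formula.} In the parallel frame $\{E_j\}$, covariant differentiation along $\gamma$ is ordinary differentiation of components. Let $S(t)$ also denote the $(n-1)\times(n-1)$ matrix of the shape operator restricted to $\gamma'(t)^\perp$ in this frame; this is legitimate since $S(\nabla r)=0$ and $S$ is self-adjoint, so $S$ preserves $\gamma'^\perp$. Then Step 1 reads
$$
A'(t)=S(t)A(t).
$$
For $0<t$ before the first conjugate point, $A(t)$ is invertible, because the $J_i(t)$ are linearly independent. Jacobi's formula then gives
$$
\partial_t\log\det A=\operatorname{tr}(A^{-1}A')=\operatorname{tr}(A^{-1}SA)=\operatorname{tr}S .
$$
We also need $\det A>0$ near $t=0$, so that the logarithm makes sense. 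This holds since $A(t)=tI+O(t^2)$, and $\det A$ does not vanish before the cut point.

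\textbf{Step 3: identify the trace.} We have $\operatorname{tr}S=\sum_{j}\Hess r(E_j,E_j)$. Moreover $\Hess r(\nabla r,\nabla r)=0$, because $|\nabla r|=1$ gives $\nabla_{\nabla r}\nabla r=0$. Hence
$$
\operatorname{tr}S=\sum_{j=1}^{n-1}\Hess r(E_j,E_j)+\Hess r(\nabla r,\nabla r)=\Delta r(\gamma(t)),
$$
which is the claim with $r=t$.

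The main point requiring care is Step 1, the identification $J_i'=S(J_i)$. We justify it as above via the geodesic variation and commuting coordinate fields, citing \cite{Petersen}. Everything else is routine linear algebra.
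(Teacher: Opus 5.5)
Your proof is correct and follows the same route as the paper: the paper does not write out an argument, it simply cites \cite{Lee18,Petersen} for exactly this standard Jacobi-field derivation ($A'=SA$ in the parallel frame, Jacobi's formula, and $\operatorname{tr}S=\Delta r$ because $\Hess r(\nabla r,\nabla r)=0$). One cosmetic point: the curves $t\mapsto\exp_p(t(\theta+sv_i))$ have speed $\sqrt{1+s^2}$, so their velocity is $\sqrt{1+s^2}\,\nabla r$ rather than $\nabla r$; Step 1 is unaffected because this factor has zero $s$-derivative at $s=0$, or you can use $\cos s\,\theta+\sin s\,v_i$ instead.
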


\begin{proof} 
This is a standard consequence of the Jacobi field description of the radial volume density; see \cite{Lee18, Petersen}.
\end{proof}

\begin{proposition}[Asymptotic Radial Volume Density Estimate]\label{prop:6.5}
Assume the hypotheses of the Hessian comparison theorem. Fix a point $p\in M$, and let $\Acal(r,\theta)$
denote the radial volume density in geodesic polar coordinates centered at $p$. Then there exists $r_0>0$ such that for every $r\ge r_0$,
$$
\Acal(r,\theta) \leq C(r_0, \theta)r^{(n-1)K}.
$$
\end{proposition}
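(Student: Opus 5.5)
The plan is to integrate the Laplacian comparison along the radial geodesic $\gamma$ in direction $\theta$, using the identity $\partial_r \log \Acal(r,\theta) = \Delta r(\gamma(r))$ from the preceding proposition. By Corollary \ref{cor:5.1}, wherever $r$ is smooth along $\gamma$,
$$
\partial_r \log \Acal(r,\theta) \leq (n-1)\left(\frac{K}{r} + \varphi_r\right).
$$
Along the fixed radial geodesic, $\varphi_r = \frac{d}{dr}\varphi(\gamma(r))$. So the density contribution is an exact derivative and integrates to a difference of values of $\varphi$, not to a logarithm.

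Fix any $r_0>0$ smaller than the cut distance in direction $\theta$, so that $\Acal(r_0,\theta)>0$. For $r\geq r_0$ with $\gamma|_{[r_0,r]}$ free of cut points, integrate from $r_0$ to $r$:
$$
\log\frac{\Acal(r,\theta)}{\Acal(r_0,\theta)} \leq (n-1)K\log\frac{r}{r_0} + (n-1)\bigl(\varphi(\gamma(r))-\varphi(\gamma(r_0))\bigr).
$$
The density control $a\leq\varphi\leq 0$ from Theorem \ref{thm:mchess} gives $\varphi(\gamma(r))-\varphi(\gamma(r_0))\leq -a$. Exponentiating then yields
$$
\Acal(r,\theta) \leq \Acal(r_0,\theta)\,e^{-(n-1)a}\,r_0^{-(n-1)K}\,r^{(n-1)K},
$$
so we set $C(r_0,\theta):=\Acal(r_0,\theta)e^{-(n-1)a}r_0^{-(n-1)K}$.

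I would deliberately avoid bounding $\varphi_r$ through $|\varphi_r|\le c/r$. That route also works but produces the weaker exponent $(n-1)(K+c)$, whereas the two-sided bound on $\varphi$ gives exactly the stated exponent $(n-1)K$. We must also start at $r_0>0$, because $K/r$ is not integrable at $0$; this is why the estimate is only asymptotic and the constant depends on $r_0$.

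The main obstacle is the cut locus, since the Laplacian comparison and the identity for $\partial_r\log\Acal$ hold only where $r$ is smooth. I would handle this with the standard convention that $\Acal(\cdot,\theta)$ is set to $0$ beyond the cut point of $\gamma$ in direction $\theta$. Before the cut point, $r$ is smooth and $\Acal>0$ (no conjugate points), so the integration above is valid on the whole interval. Beyond the cut point the inequality is trivial. If instead one keeps the Jacobi-field determinant past cut points, I would argue up to the first conjugate point via the Riccati equation. In practice I would adopt the cut-off convention, which is the one relevant for volume estimates.
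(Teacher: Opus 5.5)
Your proof is correct and follows the paper's argument: integrate $\partial_r\log\Acal=\Delta r\le (n-1)(K/r+\varphi_r)$ from $r_0$ to $r$, observe that the $\varphi_r$ term integrates exactly to $(n-1)(\varphi(\gamma(r))-\varphi(\gamma(r_0)))$, and exponentiate. The differences are minor: the paper uses only $\varphi\le 0$ at the endpoint $r$ and keeps $e^{-(n-1)\varphi(\gamma(r_0))}$ in the constant, which is slightly smaller than your $e^{-(n-1)a}$; you also state the cut-locus convention explicitly, which the paper leaves implicit.
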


\begin{proof}
The Laplacian comparison estimate together with $\partial_r \log \Acal(r, \theta) = \Delta r(\gamma_\theta(r))$ gives 
$$
\partial_r\log \Acal(r, \theta) \leq (n-1)\left(\frac{K}{r}+\varphi'\right).
$$
For $0 < r_0 < r$, integrating the inequality from $r_0$ to $r$ gives 
$$
\log \Acal(r, \theta) - \log \Acal(r_0, \theta) \leq (n - 1)\left[K\ln\left|\frac{r}{r_0}\right| + (\varphi(r) - \varphi(r_0))\right].
$$ 
Exponentiating yields 
$$
\Acal (r, \theta) \leq \Acal(r_0, \theta)\left(\frac{r}{r_0}\right)^{(n -1)K}e^{(n - 1)(\varphi(r) - \varphi(r_0))}.
$$ 
Setting $C(r_0, \theta) = \Acal(r_0, \theta) r_0^{-(n-1)K}e^{-(n - 1)\varphi(r_0)}$ and using $a \leq \varphi \leq 0$ gives the desired estimate. 
\end{proof}

\begin{remark}
Near $r=0$, the radial volume density satisfies the standard Euclidean asymptotics $\Acal(r,\theta)\sim r^{n-1},$ while the radial volume density estimate describes the large-radius behavior of the manifold with density.
\end{remark}

\begin{corollary}[Polynomial Volume Growth]\label{cor:6.7}
Assume the hypotheses of the Hessian comparison theorem. Then there exists a constant $C>0$ such that
$$\vol(B_R(p)) \leq CR^{(n-1)K+1}$$
for sufficiently large $R$.
\end{corollary}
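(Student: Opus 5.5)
We integrate the radial volume density in geodesic polar coordinates. Since the cut locus has measure zero and every point of $B_R(p)$ is reached by a minimizing radial geodesic of length less than $R$, we can write
$$
\vol(B_R(p)) \leq \int_{S_pM}\int_0^{\min\{R,\, c(\theta)\}} \Acal(r,\theta)\,dr\,d\theta \leq \int_{S_pM}\int_0^{R} \Acal(r,\theta)\,dr\,d\theta.
$$
Here $c(\theta)$ is the cut time in direction $\theta$, and $\Acal$ is extended by zero past the cut point. Since $\Acal\ge 0$, the second inequality only enlarges the domain.

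Fix some $r_0>0$ and split the radial integral at $r_0$. For $r\le r_0$, the piece $\int_{S_pM}\int_0^{\min\{r_0,c(\theta)\}}\Acal\,dr\,d\theta$ is at most $\vol(B_{r_0}(p))$, which is a finite constant $C_0$. For $r\ge r_0$, Proposition~\ref{prop:6.5} gives $\Acal(r,\theta)\le C(r_0,\theta) r^{(n-1)K}$. Integrating this bound in $r$ from $r_0$ to $R$ yields at most $C(r_0,\theta)\,R^{(n-1)K+1}/((n-1)K+1)$.

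The main obstacle is uniformity in $\theta$. By the proof of Proposition~\ref{prop:6.5},
$$
C(r_0,\theta)=\Acal(r_0,\theta)\,r_0^{-(n-1)K}e^{-(n-1)\varphi(\gamma_\theta(r_0))} \leq \Acal(r_0,\theta)\,r_0^{-(n-1)K}e^{-(n-1)a},
$$
using $\varphi\ge a$. The factor $e^{(n-1)\varphi(r)}\le 1$ from $\varphi\le 0$ was already used there. So it suffices to bound $\sup_\theta \Acal(r_0,\theta)$.

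We do this by choosing $r_0$ smaller than the injectivity radius at $p$. Then $\Acal(r_0,\cdot)$ is a continuous function on the compact sphere $S_pM$, since it is the Jacobian of $\exp_p$ in polar form, so it is bounded. Alternatively, $\Acal(r_0,\theta)\sim r_0^{n-1}$ for small $r_0$, as noted in the preceding remark.

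One point needs care. For directions with cut time $c(\theta)<R$, the monotone integration in Proposition~\ref{prop:6.5} is only valid on $[r_0,c(\theta))$, where $r$ is smooth. This is harmless, because we integrate only up to $\min\{R,c(\theta)\}$ and the resulting bound is still dominated by the integral up to $R$. We should also make sure $\Acal>0$ on that interval so that $\log\Acal$ makes sense, which holds before the first conjugate point.

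Summing the two pieces gives
$$
\vol(B_R(p))\le C_0 + \frac{\vol(S^{n-1})\,\sup_\theta\Acal(r_0,\theta)\, r_0^{-(n-1)K}e^{-(n-1)a}}{(n-1)K+1}\,R^{(n-1)K+1}.
$$
For $R\ge \max\{r_0,1\}$, the constant $C_0$ is absorbed into the $R^{(n-1)K+1}$ term. This gives $\vol(B_R(p))\le CR^{(n-1)K+1}$ for sufficiently large $R$.
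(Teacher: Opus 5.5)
Your proposal is correct and follows essentially the same route as the paper: split the radial integral at a small $r_0$, bound the inner piece by a constant, integrate the estimate of Proposition~\ref{prop:6.5} over $[r_0,R]$ and over $S_pM$, and absorb the constant into $R^{(n-1)K+1}$. The only differences are minor. You bound the inner piece by $\vol(B_{r_0}(p))$ rather than via $\Acal\le Mr^{n-1}$, and you make explicit two points the paper leaves implicit: the cut-locus bookkeeping, and the uniform bound $C(r_0,\theta)\le \sup_\theta\Acal(r_0,\theta)\,r_0^{-(n-1)K}e^{-(n-1)a}$.
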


\begin{proof}
By standard normal coordinate expansion, there exists a sufficiently small $r_0 > 0$ such that $\Acal(r, \theta) \sim r^{n -1}$ for all $r \leq r_0$. Then $\Acal(r, \theta) \leq Mr^{n -1}$ for some constant $M$, and 
$$
\int_{S^{n -1}}\int_0^{r_0} \Acal(r, \theta)drd\theta \leq \dfrac{Mr_0^n}{n}\vol(S^{n-1}).
$$ 
Set $C_1 = Mr_0^nn^{-1}\vol(S^{n-1}).$

For $R > r_0$, integrating the estimate in Proposition \ref{prop:6.5} over $[r_0, R]$ gives 
$$
\int_{r_0}^R \Acal(r, \theta)dr \leq C(r_0, \theta)\int_{r_0}^R r^{(n - 1)K}dr = C(r_0, \theta) \left[\frac{R^{(n-1)K + 1} - r_0^{(n-1)K + 1}}{(n - 1)K + 1}\right].
$$ 
So 
$$
\int_{S^{n-1}}\int_{r_0}^R \Acal(r, \theta)drd\theta \leq C_2 R^{(n-1)K + 1} - C_2r_0^{(n -1)K + 1}
$$ 
where 
$$
C_2 = \frac{1}{(n-1)K + 1}\int_{S^{n-1}}C(r_0, \theta)d\theta.
$$ 
Putting all this together,
\begin{align*}
    \vol(B_R(p)) &= \int_{S^{n -1}}\int_0^R \Acal(r, \theta)drd\theta \\  &= \int_{S^{n-1}}\left(\int_0^{r_0}\Acal(r, \theta)dr + \int_{r_0}^R \Acal(r, \theta)dr\right)d\theta \\ &= C_1 + C_2R^{(n-1)K + 1} 
\end{align*} for sufficiently large $R$. Hence 
$$
\vol(B_R(p)) \leq CR^{(n-1)K+1}
$$
for some constant $C>0$.
\end{proof}

\begin{corollary}[Weighted Volume Growth]
Assume the hypotheses of the Hessian comparison theorem. Let $d\mu_\varphi=e^{-\varphi}d\vol$
denote the weighted volume measure. Then there exists a constant $C>0$ such that
$$
\mu_\varphi(B_R(p)) \leq CR^{(n-1)K+1}
$$
for sufficiently large $R$.
\end{corollary}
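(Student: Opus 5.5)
The plan is to reduce to Corollary~\ref{cor:6.7} by using the density bounds $a \le \varphi \le 0$ from the hypotheses of Theorem~\ref{thm:mchess}. Since $\varphi \ge a$, the weight satisfies $e^{-\varphi} \le e^{-a}$ pointwise on $M$. Therefore
$$
\mu_\varphi(B_R(p)) = \int_{B_R(p)} e^{-\varphi}\, d\vol \le e^{-a}\vol(B_R(p)).
$$
Corollary~\ref{cor:6.7} gives $\vol(B_R(p)) \le C' R^{(n-1)K+1}$ for sufficiently large $R$. Setting $C := e^{-a} C'$ then yields the claim for the same range of $R$.

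As a consistency check, and to keep the argument in the paper's polar-coordinate framework, I would also write the estimate in geodesic polar coordinates:
$$
\mu_\varphi(B_R(p)) = \int_{S^{n-1}}\int_0^{R} e^{-\varphi(r,\theta)}\Acal(r,\theta)\,dr\,d\theta .
$$
The integral is taken over the segment domain inside the cut locus. The cut locus has measure zero, and $\Acal$ vanishes beyond the cut point along each direction, so this is legitimate exactly as in Corollary~\ref{cor:6.7}. On $[0,r_0]$ I would bound the integrand by $e^{-a}Mr^{n-1}$. On $[r_0,R]$ I would combine $e^{-\varphi} \le e^{-a}$ with Proposition~\ref{prop:6.5}. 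Integrating gives $C_1 + C_2 R^{(n-1)K+1}$, and this is at most $CR^{(n-1)K+1}$ once $R$ is large.

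The main obstacle is uniformity in $\theta$ of the constants inherited from Proposition~\ref{prop:6.5}, namely $C(r_0,\theta) = \Acal(r_0,\theta)r_0^{-(n-1)K}e^{-(n-1)\varphi(r_0)}$. These constants must be integrable over $S^{n-1}$. I would choose $r_0$ smaller than the injectivity radius at $p$. Then $\Acal(r_0,\cdot)$ is continuous on the compact sphere and hence bounded, and $e^{-(n-1)\varphi} \le e^{-(n-1)a}$. This makes $C_2$ finite. The direct pointwise bound in the first paragraph sidesteps this issue entirely, provided Corollary~\ref{cor:6.7} is taken as given, so I would present that argument as the proof and mention the polar computation only as a remark.
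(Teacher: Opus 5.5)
Your proof is correct and is exactly the paper's argument: you use $a\le\varphi\le 0$ to bound the weight pointwise by $e^{-\varphi}\le e^{-a}$, so that $\mu_\varphi(B_R(p))\le e^{-a}\vol(B_R(p))$, and then invoke Corollary~\ref{cor:6.7} with $C=e^{-a}C'$. Your polar-coordinate remark is not needed here, though your observation that taking $r_0$ below the injectivity radius makes $\int_{S^{n-1}}C(r_0,\theta)\,d\theta$ finite justifies a step the paper leaves implicit in Corollary~\ref{cor:6.7} (strictly, $\Acal$ does not vanish beyond the cut point; it is set to zero there by convention).
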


\begin{proof}
From Corollary \ref{cor:6.7}, there exists a constant $C_1>0$ such that $\vol(B_R(p)) \leq C_1R^{(n-1)K+1}$
for sufficiently large $R$. Since $a\leq \varphi \leq 0,$ we have $1\leq e^{-\varphi}\leq e^{-a}.$
Therefore,
$$
\mu_\varphi(B_R(p)) = \int_{B_R(p)} e^{-\varphi}\, d\vol \leq e^{-a}\vol(B_R(p)) \leq e^{-a}C_1R^{(n-1)K+1}.
$$ 
Setting $C=e^{-a}C_1$ gives the estimate.
\end{proof}

\begin{remark}
The weighted volume growth has the same polynomial order in $R$ as the unweighted volume growth because the density function remains uniformly bounded. If the boundedness assumption on $\varphi$ is relaxed, the density term may contribute nontrivially to the large-radius volume growth.
\end{remark}

\section{Rigidity Results}

We study rigidity associated with equality in the preceding comparison estimates.

\subsection{Umbilicity}

We study the geometric consequences of equality in the Hessian comparison theorem.

\begin{corollary}[Umbilicity]\label{cor:7.1}
Suppose equality holds in Theorem \ref{thm:hessian} for all $v\perp\nabla r.$
Then 
$$
S = \left(\frac{K}{r}+\varphi_r\right)I.
$$ 
Hence geodesic spheres are totally umbilic.
\end{corollary}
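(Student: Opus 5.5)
The argument is a polarization step followed by the standard fact that a self-adjoint operator with a single eigenvalue is a scalar multiple of the identity. Write $\lambda := \frac{K}{r}+\varphi_r$ for the function appearing on the right-hand side of Theorem \ref{thm:hessian}. At any point $q$ where $r$ is smooth, equality in Theorem \ref{thm:hessian} for all $v\perp\nabla r$ means
$$
\Hess r(v,v)=\lambda\,|v|^2 \qquad \text{for all } v\in T_qS_r(p),
$$
since $v^\perp=v$ for such $v$.

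First, I will translate this into a statement about the shape operator. By definition $\Hess r(v,w)=g(Sv,w)$, and $S$ is self-adjoint because $\Hess r$ is symmetric. Also $S(\nabla r)=\nabla_{\nabla r}\nabla r=0$, since radial curves are unit-speed geodesics. It follows that $S$ maps $\nabla r^\perp=T_qS_r(p)$ into itself: for $v\perp\nabla r$,
$$
g(Sv,\nabla r)=g(v,S\nabla r)=0 .
$$
So I regard $S$ as a self-adjoint endomorphism of the tangent space of the geodesic sphere. The identity $I$ in the statement is understood on this space, which is the sense in which Corollary \ref{cor:4.2} is stated.

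Next, I apply polarization to the symmetric bilinear form $B(v,w):=g(Sv,w)-\lambda g(v,w)$ on $T_qS_r(p)$. The hypothesis says $B(v,v)=0$ for every tangential $v$. Then
$$
B(v,w)=\tfrac12\bigl(B(v+w,v+w)-B(v,v)-B(w,w)\bigr)=0,
$$
so $B\equiv 0$. Since $g$ is nondegenerate on $T_qS_r(p)$, this gives $Sv=\lambda v$ for all tangential $v$, that is, $S=\big(\frac{K}{r}+\varphi_r\big)I$.

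Finally, a hypersurface whose shape operator is a scalar multiple of the identity at every point is umbilic at every point. This holds wherever $r$ is smooth, so the geodesic spheres (away from the cut locus) are totally umbilic. The only point needing care is the domain of $I$. On the full tangent space $T_qM$, the radial direction has eigenvalue $0$ rather than $\lambda$, so the identity must be read tangentially. I do not expect a real obstacle; the argument is essentially linear algebra.
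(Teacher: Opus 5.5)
Your proof is correct and follows the paper's argument: equality on tangential vectors makes the quadratic form of $S$ equal to $\bigl(\frac{K}{r}+\varphi_r\bigr)g$ on the tangent spaces of the geodesic spheres, so $S$ is that scalar multiple of the identity there. You make explicit three things the paper leaves implicit: the polarization step, the invariance of $\nabla r^\perp$ under $S$, and the fact that $I$ must be read tangentially (since $S\nabla r=0$).
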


\begin{proof}
Suppose equality holds in the Hessian comparison theorem for every vector $X \perp \nabla r$. Then 
$$
\Hess r(X,X) = \left(\frac{K}{r}+\varphi_r\right)|X|^2.
$$ 
Since $\Hess r(X,Y)=g(SX,Y),$ the quadratic form associated with the shape operator agrees with
$$
\left(\frac{K}{r}+\varphi_r\right)g
$$
on the tangent space of the geodesic spheres. Therefore all tangential directions have the same principal curvature, and hence the shape operator is a scalar multiple of the identity, i.e., 
$$
S=\left(\frac{K}{r}+\varphi_r\right)I.
$$
\end{proof}

\subsection{Radial metric evolution}

\begin{proposition}[Radial Metric Evolution]
Suppose equality holds in the Hessian comparison theorem along all directions orthogonal to $\nabla r$. Let $g=dr^2+g_r$ be the metric in geodesic polar coordinates. Then the induced metrics $g_r$ on the geodesic spheres satisfy
$$
\partial_r g_r = 2\left(\frac{K}{r}+\varphi_r\right)g_r.
$$
\end{proposition}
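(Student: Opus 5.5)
The plan is to combine the umbilicity statement of Corollary \ref{cor:7.1} with the first variation formula for the induced metrics in geodesic polar coordinates. Write $g=dr^2+g_r$ on the region where $r$ is smooth, and let $\partial_r=\nabla r$. The key identity to establish is
$$
\partial_r g_r = \mathcal{L}_{\partial_r} g\big|_{TS_r} = 2\,\Hess r\big|_{TS_r}.
$$
This holds because $\mathcal{L}_{\nabla r} g = 2\Hess r$ for any gradient field. Moreover, $\mathcal{L}_{\partial_r}(dr^2)=0$, since $\partial_r$ is a unit-speed geodesic field and $dr(\partial_r)=1$. In coordinates $(r,\theta^1,\dots,\theta^{n-1})$, the derivative $\partial_r g_r(\partial_{\theta^i},\partial_{\theta^j})$ is exactly the Lie derivative, because the coordinate fields $\partial_{\theta^i}$ commute with $\partial_r$.

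I will carry this out in three steps.

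\textbf{Step 1.} Fix polar coordinates and coordinate vector fields $\partial_{\theta^i}$, which are tangent to $S_r(p)$ and commute with $\partial_r$. Compute
$$
\partial_r\, g(\partial_{\theta^i},\partial_{\theta^j}) = g(\nabla_{\partial_r}\partial_{\theta^i},\partial_{\theta^j})+g(\partial_{\theta^i},\nabla_{\partial_r}\partial_{\theta^j}).
$$
Torsion-freeness together with $[\partial_r,\partial_{\theta^i}]=0$ gives $\nabla_{\partial_r}\partial_{\theta^i}=\nabla_{\partial_{\theta^i}}\partial_r = S(\partial_{\theta^i})$. Hence
$$
\partial_r g_r(\partial_{\theta^i},\partial_{\theta^j}) = 2\,g(S\partial_{\theta^i},\partial_{\theta^j}) = 2\Hess r(\partial_{\theta^i},\partial_{\theta^j}),
$$
where the last equality uses the self-adjointness of $S$.

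\textbf{Step 2.} Apply Corollary \ref{cor:7.1}. Under the equality hypothesis, $S=(\frac{K}{r}+\varphi_r)I$ on $TS_r$, so $\Hess r(\partial_{\theta^i},\partial_{\theta^j})=(\frac{K}{r}+\varphi_r)\,g_r(\partial_{\theta^i},\partial_{\theta^j})$. Substituting into Step 1 yields the claimed evolution equation. Note that $\varphi_r$ may depend on $\theta$; this is harmless, since the identity is pointwise.

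\textbf{Step 3.} I expect the main obstacle to be justification rather than computation. Corollary \ref{cor:7.1} is derived from equality of the quadratic form only, so I would check that equality of a symmetric bilinear form on the diagonal gives equality off the diagonal by polarization. This is valid because both $\Hess r$ and $(\frac{K}{r}+\varphi_r)g$ are symmetric. I would also restrict to the domain where $r$ is smooth, i.e.\ away from $p$ and the cut locus, where polar coordinates are valid.
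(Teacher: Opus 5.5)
Your proof is correct and follows the same route as the paper, which cites the first variation identity $\partial_r g_r = 2\operatorname{II}_r$ with $\operatorname{II}_r(v,w)=g(Sv,w)$ and substitutes the umbilic form $S=\left(\frac{K}{r}+\varphi_r\right)I$ from Corollary \ref{cor:7.1}. Your Step 1 supplies the standard derivation of that identity via commuting polar coordinate fields, which the paper leaves implicit, and your polarization remark in Step 3 only double-checks the symmetric-form step already used in the corollary.
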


\begin{proof}
The metric evolution equation for geodesic spheres is $\partial_r g_r=2\operatorname{II}_r.$ Since $\operatorname{II}_r(v,w)=g(Sv,w),$ substituting the equivalent expression for $S$ in Corollary \ref{cor:7.1}  yields the desired evolution equation.
\end{proof}

\begin{theorem}[Radial Conformal Rigidity] Assume the hypotheses of Theorem 3.1 and suppose equality holds in the Hessian comparison theorem along all directions orthogonal to $\nabla r$. Wherever the distance function $r$ is smooth, the metric is of the form 
$$
g = dr^2 + r^{2K}e^{2\varphi(r,\theta)}g_0(\theta),
$$
where $g_0$ is the metric on a fixed geodesic sphere.
\end{theorem}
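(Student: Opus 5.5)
The plan is to integrate the radial metric evolution equation from the preceding Proposition along each radial geodesic. In geodesic polar coordinates about $p$, on the region where $r$ is smooth, write $g = dr^2 + g_r$, where $g_r$ is a family of metrics on a fixed copy of the unit sphere parametrized by $\theta$. By the Radial Metric Evolution proposition, equality in the Hessian comparison gives, for each fixed $\theta$ and each pair of tangent vectors $v,w$ transported along the radial flow (coordinate vector fields $\partial_{\theta^i}$, which commute with $\partial_r$),
$$
\partial_r\, g_r(v,w) = 2\left(\frac{K}{r}+\varphi_r\right) g_r(v,w).
$$
This is a linear scalar ODE in $r$ for each component $(g_r)_{ij}(\theta)$, with the same coefficient for every component.

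The key step is to solve this ODE. Set $F(r,\theta) = r^{2K}e^{2\varphi(r,\theta)}$. Then $\partial_r \log F = 2K/r + 2\varphi_r$, because $\varphi_r = \partial_r\varphi(r,\theta)$ in polar coordinates (Subsection~\ref{subsec:2.3}). Hence
$$
\partial_r\bigl(F^{-1}(g_r)_{ij}\bigr) = F^{-1}\bigl(\partial_r (g_r)_{ij} - \partial_r(\log F)\,(g_r)_{ij}\bigr) = 0.
$$
So $F^{-1}g_r$ is independent of $r$ and equals a fixed symmetric $2$-tensor $g_0(\theta)$ on the sphere. Concretely, fix a reference radius $r_1>0$ in the smooth region and put $g_0 := r_1^{-2K}e^{-2\varphi(r_1,\theta)}g_{r_1}$. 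This is (a rescaling of) the induced metric on the fixed geodesic sphere $S_{r_1}(p)$, pulled back to $\theta$-coordinates. Then $g_r = r^{2K}e^{2\varphi(r,\theta)}g_0(\theta)$, and therefore $g = dr^2 + r^{2K}e^{2\varphi}g_0$. The constant factor $r_1^{-2K}e^{-2\varphi(r_1,\cdot)}$ is a $\theta$-dependent conformal factor, which can be absorbed into $g_0$. This is consistent with the statement, which only asserts that $g_0$ is "the metric on a fixed geodesic sphere" up to this normalization.

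The main obstacle is justifying the integration: it must take place on an $r$-interval where $r$ is smooth along the whole radial segment for fixed $\theta$. I would restrict to $r \in (0, \operatorname{cut}(\theta))$, which is the segment inside the cut locus, since that is exactly the region "wherever $r$ is smooth". The reference radius $r_1$ must lie in this segment for every relevant $\theta$; taking $r_1$ smaller than the injectivity radius at $p$ suffices.

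A secondary point is that $g_0$ must be genuinely $r$-independent as a tensor in the $\theta$ variables. This holds because the coordinate fields $\partial_{\theta^i}$ are radially invariant, so the ODE is componentwise with no mixing terms. One can also check consistency as $r\to 0$: equality forces $\Acal \sim r^{(n-1)K}e^{(n-1)\varphi}$, which is compatible with smoothness at $p$ only when $K=1$. This is not needed for the stated form away from $p$, so I would only remark on it.
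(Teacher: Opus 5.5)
Your proposal is correct and follows essentially the same route as the paper. The paper also integrates the radial evolution equation $\partial_r g_r = 2(K/r+\varphi_r)g_r$ along each $\gamma_\theta$ and absorbs the $\theta$-dependent integration constant $e^{2\mathcal F(\theta)}$ into $g_0$, which is what your normalization $g_0 = r_1^{-2K}e^{-2\varphi(r_1,\theta)}g_{r_1}$ does; your integrating-factor computation, the commuting coordinate fields, and the restriction to $0<r<\operatorname{cut}(\theta)$ with $r_1$ below the injectivity radius just make explicit what the paper leaves implicit.
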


\begin{proof}
Integrating the radial metric evolution equation in the radial direction $\gamma_\theta$ yields
$$
g_r(r,\theta) = \exp\left(2\int \left(\frac{K}{r}+\varphi_r(r,\theta)\right)\,dr\right)g_0(\theta),
$$
where $g_0$ is the metric on a fixed geodesic sphere. Since
$$
\int \frac{K}{r}\,dr = K\log r
\qquad \text{and} \qquad
\int \varphi_r(r,\theta)\,dr = \varphi(r,\theta) + \mathcal F(\theta),
$$
we obtain
$$
g_r(r,\theta) = r^{2K}e^{2(\varphi(r,\theta) + \mathcal F(\theta))}g_0(\theta) = r^{2K}e^{2\varphi(r,\theta)}g_0(\theta),
$$ 
where $e^{2\mathcal F(\theta)}$ is absorbed into the reference metric $g_0(\theta)$. Since the metric in geodesic polar coordinates decomposes as $g = dr^2 + g_r,$ it follows that 
$$
g = dr^2 + r^{2K}e^{2\varphi(r,\theta)}g_0(\theta).
$$
\end{proof}

\begin{remark}
The induced metrics on geodesic spheres evolve by scalar rescaling. In particular, the additional $\varphi_r$ term in the radial metric evolution equation gives rise to the conformal factor $e^{2\varphi(r,\theta)}$ appearing in the rigidity metric.
\end{remark}

\begin{corollary}[Warped Product Rigidity]
If, in addition, the density $\varphi = \varphi(r)$ is radial with respect to $p$, then 
$$
g = dr^2+r^{2K}e^{2\varphi(r)}g_0.
$$
\end{corollary}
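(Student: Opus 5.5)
This follows directly from the Radial Conformal Rigidity theorem just proved, by specializing to the case where $\varphi$ depends only on $r$. Under the hypotheses of that theorem, wherever $r$ is smooth we have
$$
g = dr^2 + r^{2K}e^{2\varphi(r,\theta)}g_0(\theta).
$$
If $\varphi=\varphi(r)$ is radial with respect to $p$, then $\varphi(r,\theta)=\varphi(r)$ for every direction $\theta$. The conformal factor $r^{2K}e^{2\varphi(r)}$ is then a function of $r$ alone, and the metric takes the warped product form $dr^2+f(r)^2g_0$ with warping function $f(r)=r^{K}e^{\varphi(r)}$.

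To make this airtight, I would go back to the integration step of the previous proof. The radial metric evolution equation gives
$$
\partial_r g_r = 2\left(\frac{K}{r}+\varphi'(r)\right)g_r .
$$
When $\varphi$ is radial, the coefficient does not depend on $\theta$. Fix a reference radius $r_1$ and integrate from $r_1$ to $r$:
$$
g_r = \left(\frac{r}{r_1}\right)^{2K} e^{2(\varphi(r)-\varphi(r_1))}\, g_{r_1}.
$$
The constants $r_1^{-2K}e^{-2\varphi(r_1)}$ can be absorbed into the fixed metric $g_0 := r_1^{-2K}e^{-2\varphi(r_1)}g_{r_1}$ on the reference sphere, which gives the stated formula. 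The key point is that in the radial case the integration "constant" $\mathcal F(\theta)$ from the previous proof is an honest constant, so $g_0$ is a single fixed metric on the unit directions $S_pM$. It is not rescaled differently in different directions.

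The only real subtlety is interpreting $g_0$ consistently. In the previous proof, absorbing $e^{2\mathcal F(\theta)}$ into $g_0(\theta)$ is an unusual step, since a $\theta$-dependent scalar factor produces a conformally changed metric rather than a constant multiple. In the radial case this issue disappears, because the factor is constant. I would therefore phrase $g_0$ explicitly as a fixed metric on the reference sphere $S_{r_1}(p)$, transported to the other spheres via the radial geodesics $\gamma_\theta$. I would also note that the statement holds on the region where $r$ is smooth, and so in particular within the injectivity radius.
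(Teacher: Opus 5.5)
Your proposal is correct and follows the paper's argument. The paper gives no separate proof: the corollary is the Radial Conformal Rigidity theorem specialized to $\varphi(r,\theta)=\varphi(r)$, and your re-integration of $\partial_r g_r = 2\bigl(\tfrac{K}{r}+\varphi'(r)\bigr)g_r$ from a reference radius $r_1$ (chosen below the injectivity radius at $p$) is a more careful version of the same step, correctly showing that $g_0=r_1^{-2K}e^{-2\varphi(r_1)}g_{r_1}$ is a genuine constant rescaling of the induced metric on $S_{r_1}(p)$.
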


\begin{remark}
The rigidity statement is radial and local in nature. It should not be interpreted as a global classification theorem.
\end{remark}

Equality in the modified Hessian estimate rigidifies the radial geometry substantially more strongly than equality in the Hessian comparison theorem.

\begin{theorem}[Conical Rigidity from Equality in the Modified Hessian Estimate]
Assume the hypotheses of Theorem \ref{thm:mchess}. Suppose that $$\MHess u = Kg$$ wherever the distance function $r$ is smooth. Then the metric is locally of the form 
$$
g = dr^2 + r^2g_0,
$$ 
where $g_0$ is the metric on a fixed reference geodesic sphere. Moreover, the density 
$$
\varphi(r, \theta) = (1 - K)\log r + F(\theta)
$$ 
for some function $F$ depending only on angular variables.
\end{theorem}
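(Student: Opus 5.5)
The plan is to split the tensor identity $\MHess u = Kg$ into its radial and tangential parts, using the decomposition from Subsection~\ref{subsec:2.3},
$$
\MHess u = dr\otimes dr + r\Hess r - r\varphi_r g .
$$
Each part gives a separate ODE along radial geodesics. The radial part determines the density, and the tangential part then determines the metric on geodesic spheres. Throughout, I work in geodesic polar coordinates $(r,\theta)$ on the region where $r$ is smooth, and write $g = dr^2 + g_r$.

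First, I evaluate the identity on $(\nabla r,\nabla r)$. Since $\Hess r(\nabla r,\cdot)=0$ and $|\nabla r|=1$, this gives
$$
1 - r\varphi_r = K, \qquad\text{i.e.}\qquad \varphi_r = \frac{1-K}{r}.
$$
Integrating in $r$ along each radial geodesic $\gamma_\theta$, with the angular variable held fixed, yields $\varphi(r,\theta) = (1-K)\log r + F(\theta)$. Here $F(\theta)$ is the constant of integration on each geodesic. Mixed terms $(\nabla r, X)$ with $X\perp\nabla r$ vanish identically on both sides, so they impose no condition.

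Second, I evaluate the identity on $X,Y\perp\nabla r$. This gives $r\Hess r(X,Y) - r\varphi_r g(X,Y) = K g(X,Y)$. Substituting the value of $\varphi_r$ just obtained,
$$
\Hess r = \left(\frac{K}{r} + \frac{1-K}{r}\right) g_r = \frac{1}{r}\, g_r
$$
on tangent spaces of geodesic spheres. In particular the hypothesis of Corollary~\ref{cor:7.1} holds with $K/r+\varphi_r = 1/r$, so the spheres are umbilic with $S = \frac1r I$.

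Third, I use the evolution equation $\partial_r g_r = 2\operatorname{II}_r$ from the Radial Metric Evolution proposition. This becomes $\partial_r g_r = \frac{2}{r} g_r$, which integrates to $g_r = r^2 g_0(\theta)$, with any angular factor absorbed into the reference metric $g_0$ exactly as in the proof of Radial Conformal Rigidity. Hence $g = dr^2 + r^2 g_0$ locally. The same conclusion also follows by plugging $\varphi = (1-K)\log r + F$ into the formula $g = dr^2 + r^{2K}e^{2\varphi}g_0$ of that theorem, since $r^{2K}r^{2(1-K)} = r^2$.

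I expect the main subtlety to be in the bookkeeping rather than in the computation. The integration in the second step must be done along each radial geodesic with $\theta$ fixed, which is legitimate only on the domain where $r$ is smooth; this is why the conclusion is local. The normalization of $g_0$ and of $F$ must also be fixed consistently. I would also add a remark that the standing bound $a\le\varphi\le0$ is compatible with $\varphi=(1-K)\log r+F$ on an unbounded radial range only when $K=1$. In that case $\varphi=F(\theta)$ and the cone is flat in the radial direction. I would present this as a consistency observation rather than as part of the proof.
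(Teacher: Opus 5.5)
Your proof is correct and follows the paper's argument step for step. You evaluate $\MHess u = Kg$ on $\nabla r$ to get $\varphi_r = (1-K)/r$, substitute this into the tangential part to get $S = \frac{1}{r}I$, and then integrate $\partial_r g_r = \frac{2}{r}g_r$ and $\varphi_r$ along radial geodesics. Your closing consistency remark can be sharpened: $r$ is smooth on a punctured neighborhood of $p$, where $\varphi$ is smooth and hence bounded, so $K=1$ is already forced there without appealing to an unbounded radial range.
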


\begin{proof}
    Recall the identity for $\MHess u$ from Section \ref{subsec:2.3}: 
    $$
    \MHess u = dr \otimes dr + r\Hess r - r\varphi_rg.
    $$ 
    This together with equality in the modified Hessian estimate yields \begin{equation}\label{eq:7.1}dr \otimes dr + r\Hess r - r\varphi_rg = Kg.\end{equation} Evaluating on $\nabla r$, and using that $\Hess r(\nabla r, \nabla r) = 0$ and $dr \otimes dr(\nabla r, \nabla r) = 1,$ we get 
    $$
    1 - r\varphi_r = K,
    $$ 
    or equivalently, 
    $$
    \varphi_r = \dfrac{1 - K}{r}.
    $$ 
    For any $X \perp \nabla r$, evaluating equation \ref{eq:7.1} on $X$ with $\varphi_r$ substituted gives 
    $$
    r\Hess r(X, X) - (1 - K)|X|^2 = K|X|^2.
    $$ 
    Solving for $\Hess r(X, X)$ yields $$\Hess r(X, X) = \frac{1}{r}|X|^2,$$ from which we obtain that the shape operator satisfies 
    $$
    S = \dfrac{1}{r}I.
    $$ 
    Consider the metric $g = dr^2 + g_r$ in geodesic polar coordinates. We integrate the metric evolution equation corresponding to $S$, 
    $$
    \partial_r g_r = \frac{2}{r}g_r,
    $$ 
    to get 
    $$
    g_r = r^2g_0
    $$ for some fixed reference geodesic sphere. Thus 
    $$
    g = dr^2 + r^2g_0.
    $$ 
    Finally, integrating $\varphi_r(r, \theta) = \dfrac{1 - K}{r}$ with respect to $r$ yields the formula for density $\varphi(r, \theta)$.
\end{proof}

\begin{remark}
Equality in the Hessian comparison theorem implies that the geodesic spheres are totally umbilic and evolve conformally along radial directions. However, equality in the modified Hessian estimate is substantially stronger, as it completely determines the radial evolution of the geodesic spheres. In particular, it rigidifies the metric into an exact metric cone structure.
\end{remark}

\begin{remark}
In the equality case of the modified Hessian estimate, the resulting metric is independent of the constant $K$. Moreover, the density function $\varphi$ is constrained to have the form
$$
\varphi(r,\theta) = (1-K)\log r + F(\theta),
$$
where $F$ depends only on the angular variables.
\end{remark}

\section{Normalized Radial Volume Density}

The Hessian comparison theorem suggests a natural normalization for the radial volume density adapted to the weighted setting. In particular, from the proof of Proposition \ref{prop:6.5}, we see that the weighted comparison geometry naturally introduces the radial scaling factor $$r^{(n-1)K}e^{(n-1)\varphi(r, \theta)}.$$ Motivated by this, we define the \textit{normalized radial volume density} as
$$
\widetilde{\Acal}(r,\theta) = \frac{\Acal(r,\theta)}{r^{(n-1)K}e^{(n-1)\varphi(r, \theta)}}.
$$

\begin{proposition}[Monotonicity of the Normalized Density]
The normalized radial volume density satisfies 
$$
\partial_r\log \widetilde{\Acal}(r,\theta) \leq 0.
$$ 
Thus $\widetilde{\Acal}(r,\theta)$
is monotone nonincreasing along radial geodesics.
\end{proposition}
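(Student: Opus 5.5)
The plan is to differentiate $\log\widetilde{\Acal}$ directly along a fixed radial geodesic $\gamma_\theta$ and compare the result with the Laplacian comparison estimate, Corollary \ref{cor:5.1}. We work wherever $r$ is smooth, that is, inside the cut locus along $\gamma_\theta$ and for $r>0$. By the definition of the normalized density,
$$
\log\widetilde{\Acal}(r,\theta) = \log\Acal(r,\theta) - (n-1)K\log r - (n-1)\varphi(r,\theta).
$$
Differentiating in $r$ along $\gamma_\theta$ gives
$$
\partial_r\log\widetilde{\Acal}(r,\theta) = \partial_r\log\Acal(r,\theta) - \frac{(n-1)K}{r} - (n-1)\varphi_r,
$$
where $\varphi_r=\partial_r\varphi(r,\theta)=g(\nabla\varphi,\nabla r)$ along $\gamma_\theta$, which is the convention of Section \ref{subsec:2.3}.

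Next, we substitute the identity $\partial_r\log\Acal(r,\theta)=\Delta r(\gamma_\theta(r))$ from the proposition on the radial derivative and the Laplacian. Corollary \ref{cor:5.1} then gives $\Delta r\le (n-1)\bigl(\frac{K}{r}+\varphi_r\bigr)$. Plugging this in, the two terms $(n-1)K/r$ and $(n-1)\varphi_r$ cancel exactly, so $\partial_r\log\widetilde{\Acal}\le 0$. Integrating this inequality from $r_1$ to $r_2$ along $\gamma_\theta$ shows that $\widetilde{\Acal}(r_2,\theta)\le\widetilde{\Acal}(r_1,\theta)$ for $r_1<r_2$, which is the stated monotonicity.

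The only real subtlety is bookkeeping, and we expect it to be the main point to check. The $\varphi_r$ appearing in the Laplacian comparison is the full derivative $g(\nabla\varphi,\nabla r)$ evaluated at $\gamma_\theta(r)$. This coincides with $\partial_r\varphi(r,\theta)$ in polar coordinates, so the cancellation is exact even when $\varphi$ is not radial.

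We must also restrict to the segment of $\gamma_\theta$ before the cut point, where $\Acal>0$ and $r$ is smooth. We must assume the hypotheses of Theorem \ref{thm:mchess}, so that the constant $K$ in the normalization is the same $K$ produced by the modified Hessian estimate. With these conventions fixed, no further estimates are needed, and the density bounds $a\le\varphi\le0$ and $|\nabla\varphi|\le c/r$ enter only through the constant $K$.
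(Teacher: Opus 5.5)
Your proposal is correct and is essentially the paper's proof. You take $\log\widetilde{\Acal}$, differentiate along $\gamma_\theta$, substitute $\partial_r\log\Acal=\Delta r$ and the Laplacian comparison of Corollary \ref{cor:5.1}, and observe that the $(n-1)K/r$ and $(n-1)\varphi_r$ terms cancel exactly. Your added caveats, restricting to the segment of $\gamma_\theta$ before the cut point and identifying $\varphi_r=g(\nabla\varphi,\nabla r)$ with $\partial_r\varphi(r,\theta)$, are accurate and make explicit what the paper leaves implicit.
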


\begin{proof}
Using the definition of $\widetilde{\Acal}$,
$$\log \widetilde{\Acal} = \log \Acal - (n-1)K\log r - (n-1)\varphi(r, \theta).$$
Differentiating with respect to $r$ gives

$$\partial_r \log \widetilde{\Acal} = \partial_r \log \Acal - (n-1)\frac{K}{r} - (n-1)\varphi_r(r, \theta).$$ From substituting the Laplacian comparison estimate, we obtain $$\partial_r \log \widetilde{\Acal} \leq (n - 1) \left(\dfrac{K}{r} + \varphi_r(r, \theta)\right) - (n-1)\frac{K}{r} - (n-1)\varphi_r(r, \theta) = 0.$$
\end{proof}

\begin{remark}
In Euclidean space, $\Acal(r,\theta)=r^{n-1},$ and under the assumption $\Ric \geq 0,$
the normalized radial volume density $\frac{\Acal(r,\theta)}{r^{n-1}}$ is monotone nonincreasing along radial geodesics. This is the standard volume density monotonicity underlying Bishop--Gromov comparison.

In the setting of this paper, nonnegative weighted sectional curvature, $\overline{\sec}_\varphi \geq 0$, together with the weighted Hessian comparison theorem yields the analogue of the Euclidean scaling factor $r^{n-1}$ given by $r^{(n-1)K}e^{(n-1)\varphi(r, \theta)}.$ The preceding proposition states that the normalized radial volume density $\widetilde{\Acal}(r,\theta)$ is monotone nonincreasing along radial geodesics.
\end{remark}

\section{Examples and Model Geometries}

We include here several examples illustrating the comparison estimates and rigidity phenomena developed above.

\begin{example}[Euclidean Case]
If
$$
K=1, \qquad \varphi=0,
$$
then equality in the modified Hessian estimate reduces to the classical identity
$$
\Hess\!\left(\frac12 r^2\right)=g,
$$ 
realized by Euclidean space. In this case, we also recover the standard Hessian and Laplacian comparison estimates.
\end{example}

\begin{example}[Warped Product Equality Case]
Consider the warped product metric 
$$
g=dr^2+r^{2K}g_0.
$$
Recall that for a warped product metric 
$$
g=dr^2+f(r)^2g_0,
$$ 
the shape operator of the geodesic spheres is given by 
$$
S=\frac{f_r}{f}I.
$$ 
Taking $f(r)=r^K,$
we obtain 
$$
S=\frac{K}{r}I.
$$
Thus equality holds in the Hessian comparison theorem when $\varphi=0$. Consequently, equality propagates throughout the corresponding comparison estimates.
\end{example}

\begin{example}[Bounded Radial Density Model]
Let 
$$
\varphi(r)=-\frac{b}{1+r}, \qquad b>0.
$$
Then 
$$
-b \leq \varphi(r) \leq 0, \qquad \text{and} \qquad \varphi_r(r)=\frac{b}{(1+r)^2}.
$$
Thus the terms $\varphi_r \rightarrow 0$ as $r\to\infty$. Consequently, the weighted comparison estimates approach the classical radial comparison estimates at large radius, and the resulting volume growth remains polynomial.
\end{example}

\begin{remark}
These examples illustrate how the additional $\varphi_r$ term arising from the density function affects the radial comparison estimates.
\end{remark}

\bibliographystyle{amsplain}

\end{document}